\newtheorem{thm}{Theorem}[section] 
\newtheorem{cor}[thm]{Corollary}
\newtheorem{lem}[thm]{Lemma}
\newtheorem{prop}[thm]{Proposition}
\theoremstyle{definition}
\newtheorem{rem}[thm]{Remark}
\newtheorem{exmpl}[thm]{Example}
\newtheorem{defn}[thm]{Definition}
\newtheorem{algo}[thm]{Algorithm}
\DeclareMathOperator{\ch}{char}
\DeclareMathOperator{\sgn}{sgn}
\DeclareMathOperator{\sn}{sn}
\renewcommand{\Re}{\mathop{\mathfrak{Re}}\nolimits}
\renewcommand{\Im}{\mathop{\mathfrak{Im}}\nolimits}
\newcommand\operA[2]{{\if!#2!\operatorname{#1}\else{\operatorname{#1}_{#2}^{\phantom{I}}}\fi}} 
\newcommand\Cref[1]{{Corollary~\ref{#1}}}%
\def\tr{{\operatorname{Tr}}}
\def\norm{{\operatorname{Norm}}}
\newcommand{\Trace}[1][]{\if!#1!\operatorname{Tr}\else{\operatorname{Tr}_{#1}^{\phantom{I}}}\fi} 
\long\def\forget#1\forgotten{{}} %
\def\({\left(}
\def\){\right)}
\newif\iffurther
\newif\ifXY 
\def\ps@pprintTitle{%
	\let\@oddhead\@empty
	\let\@evenhead\@empty
	\def\@oddfoot{}%
	\let\@evenfoot\@oddfoot}
\begin{document}

\begin{frontmatter}

\title{Roots and Critical Points of Polynomials over Cayley--Dickson Algebras}
\author{Adam Chapman}
\ead{adam1chapman@yahoo.com}
\address{School of Computer Science, Academic College of Tel-Aviv-Yaffo, Rabenu Yeruham St., P.O.B 8401 Yaffo, 6818211, Israel}

\author{Alexander Guterman}
\ead{guterman@list.ru}
\address{ 
Department of Mathematics and Mechanics, \\
Lomonosov  Moscow State University, Moscow, 119991, Russia \\ 
Moscow Center for Fundamental and Applied Mathematics, Moscow, 119991, Russia\\
Moscow Center for Continuous Mathematical Education,  Moscow, 119002, Russia
}

\author{Solomon Vishkautsan}
\ead{wishcow@gmail.com}
\address{Department of Computer Science, Tel-Hai Academic College, Upper Galilee, 12208 Israel}

\author{Svetlana Zhilina}
\ead{s.a.zhilina@gmail.com}
\address{ 
Department of Mathematics and Mechanics, \\
Lomonosov Moscow State University, Moscow, 119991, Russia \\
Moscow Center for Fundamental and Applied Mathematics, Moscow, 119991, Russia\\
Moscow Center for Continuous Mathematical Education,  Moscow, 119002, Russia
}

\begin{abstract}
We study the roots of polynomials over Cayley--Dickson algebras over an arbitrary field and of arbitrary dimension.
For this purpose we generalize the concept of spherical roots from quaternion and octonion polynomials to this setting, and demonstrate their basic properties. We show that the spherical roots (but not all roots) of a polynomial $f(x)$ are also roots of its companion polynomial $C_f(x)$ (defined to be the norm of $f(x)$). For locally-complex Cayley--Dickson algebras, we show that the spherical roots of $f'(x)$ (defined formally) belong to the convex hull of the roots of $C_f(x)$, and we also prove that all roots of $f'(x)$ are contained in the snail of $f(x)$, as defined by Ghiloni and Perotti for quaternions. The latter two results generalize the classical Gauss--Lucas theorem to the locally-complex Cayley--Dickson algebras, and we also generalize Jensen's classical theorem on real polynomials to this setting.
\end{abstract}

\begin{keyword}
Cayley--Dickson algebras; locally-complex algebras; octonion algebras; Gauss--Lucas theorem; Jensen's theorem;
\MSC[2020] primary 17A20; secondary 17A45, 17A75, 17D05
\end{keyword}

\end{frontmatter}

\section{Introduction}

Cayley--Dickson algebras $\{ A_n \}_{n \in \mathbb{N}}$ over an arbitrary field $F$ are a family of $2^n$-dimensional algebras over $F$ with involution, which are obtained from a two-dimensional algebra $A_1$ (or, if $\ch(F) \neq 2$, from $A_0 = F$) by repeating the Cayley--Dickson doubling process an arbitrary number of times. Each step of this process is determined by a nonzero parameter $\gamma_k \in F$ such that $A_{k+1} = A_k \{ \gamma_k \}$ (see Section~\ref{sec:prelim} for more details).

For example, starting with $A_0=\mathbb{R}$ and taking $\gamma_k = -1,$ for an integer $k\ge{0}$, at each step of the process, we obtain a sequence of $2^n$-dimensional $\mathbb{R}$-algebras which we call the real algebras of the main sequence.
In particular, we obtain $A_1=\mathbb{C}$ and $A_2=\mathbb{H}$, Hamilton's real quaternion algebra; the next algebra in the sequence is the real octonion division algebra $\mathbb{O}$, and the next one is the real sedenion algebra $\mathbb{S}$. Another important example of real Cayley--Dickson algebras is the algebra of the split-quaternions $\hat{\mathbb{H}}$, constructed by choosing $A_0=\mathbb{R}, \gamma_0=-1$ and $\gamma_1=1$, which is isomorphic to the algebra of real $(2 \times 2)$-matrices $M_2(\mathbb{R})$, see~\cite[p.~157]{McCrimmon:2004}.

The algebraic structure gets looser the higher we go in the Cayley--Dickson sequence: the complex numbers ($A_1$) are not ordered, the quaternion algebras ($A_2$) are not commutative but still associative, the octonion algebras ($A_3$) are not associative but still alternative, and the rest are not alternative. The lack of structure of $A_n$ for $n\geq 4$ makes it difficult to obtain profound results,
but the motivation to study these algebras remains \cite{biss2008large, bremner2001identities, brevsar2011locally, chan2006conjugacy, EakinSathaye:1990, moreno2006alternative}; there are also potential applications of  Cayley--Dickson algebras in physics  (e.g., \cite{furey2018three, gillard2019three, kuwata2004born}). 


The goal of this paper is to improve our understanding of the behavior of the roots of polynomials over Cayley--Dickson algebras. We also present results for the special case of polynomials over locally-complex Cayley--Dickson algebras: a real unital algebra is called locally-complex if any nonscalar element generates a subalgebra isomorphic to $\mathbb{C}$ (see \cite{brevsar2011locally} for an in-depth study of locally-complex algebras; in Section~\ref{sec:prelim} we show that locally-complex Cayley--Dickson algebras are exactly the real algebras of the main sequence). The roots of quaternion and octonion polynomials over an arbitrary field $F$ are well-understood (see, e.g., \cite{Chapman:2020b,Chapman:2020a,ChapmanMachen:2017}).
The roots of higher-dimensional Cayley--Dickson algebras are more difficult to handle, but we prove in Section~\ref{sec:roots}
that the ``spherical roots'' of a polynomial $f(x)$ can be recovered from the companion polynomial (defined as $C_f(x)=\overline{f(x)}f(x)$), and we provide an explicit algorithm for finding them.

The classical Gauss--Lucas theorem states that the critical points of a polynomial $f(x)$ with complex coefficients lie within the convex hull defined by the roots of $f(x)$ (see \cite[\S{6}]{Marden:1966}).
One can ask whether the Gauss--Lucas theorem extends to other locally-complex Cayley--Dickson algebras. In \cite{Ghiloni:2018} three versions of the theorem were considered for the quaternion algebra $\mathbb{H}$:
\begin{enumerate}[(1)]
    \item the roots of $f'(x)$ lie in the convex hull of the roots of $f(x)$,
    \item the roots of $f'(x)$ lie in the convex hull of the roots of $C_f(x)$, the companion polynomial of $f(x)$,
    \item the roots of $f'(x)$ lie in the ``snail'' (see Section~\ref{sec:gauss--lucas}) of $f(x)$.
\end{enumerate}

Ghiloni and Perotti showed in \cite{Ghiloni:2018} that (1) is in general false for polynomials of degree at least~2, (2) is false for degree~3 and above, but holds for quadratic polynomials, and (3) holds true in general.
In Section~\ref{sec:gauss--lucas} we prove that (2) holds true for the spherical roots of $f(x)$ over a locally-complex Cayley--Dickson algebra, and for arbitrary roots of quadratic octonion polynomials, and that (3) holds true for polynomials over any locally-complex Cayley--Dickson algebra.  At the end of this section we prove some analogues of the classical bounds for the roots and critical points of complex polynomials via their coefficients.

Finally, in Section~\ref{sec:jensen} we generalize Jensen's classical theorem (see \cite[\S{7}]{Marden:1966}) to locally-complex Cayley--Dickson algebras, improving the Gauss--Lucas bound for the special case of polynomials with real coefficients.
\section{Preliminaries} \label{sec:prelim}

\subsection{Cayley--Dickson algebras}

In this section we briefly discuss the important properties and notations used in the theory of Cayley--Dickson algebras that will be needed in the article. For more in-depth background on Cayley--Dickson algebras see, for example, the renowned paper \cite{Schafer:1954}, the classical monographs \cite{McCrimmon:2004,SSSZ}, and references therein. We start with the inductive definition of Cayley--Dickson algebras over an arbitrary field $F$.

Given an algebra $A$ with an involution $a \mapsto \bar{a}$ over a field $F$ and an element $\gamma \in F^\times = F \setminus \{ 0 \}$, the Cayley--Dickson doubling process produces a new algebra $B = A \{ \gamma \}$ which is defined as the set of ordered pairs of elements of $A$, i.e, $A \times A$, with the following operations. Multiplication by elements from $F$ and addition are componentwise, and multiplication by elements from $B$ is given by $(a,b)(c,d) = (ac + \gamma \bar{d}b, da + b\bar{c})$ for any $a,b,c,d\in A$. The involution then extends to $B$ by $\overline{(a,b)} = (\bar{a},-b)$.

To obtain what we call Cayley--Dickson algebras, we first take a quadratic \'{e}tale extension $A_1=K$ of $F$ with the nontrivial Galois automorphism $a \mapsto \bar{a}$. By~\cite[p.~32, Theorem~1]{SSSZ}, $K$ is isomorphic to the algebra $F[\ell_1 : \ell_1^2=\ell_1+\mu] = F + F \ell_1$ for some $\mu \in F$ with $4 \mu + 1 \neq 0$, and $\overline{\alpha + \beta \ell_1} = (\alpha + \beta) - \beta \ell_1$ for all $\alpha, \beta \in F$. Then applying the Cayley--Dickson process to $A_1$ with an element  $\gamma_1\in{F^\times}$ gives rise to a quaternion algebra $A_2 = A_1 \{ \gamma_1 \}$; applying it again with  $\gamma_2\in{F^\times}$ gives rise to an octonion algebra $A_3 = A_2 \{ \gamma_2 \}$, 
and so on. By repeating this process an arbitrary number of times, one obtains an infinite sequence $\{A_n\}_{n\in\mathbb{N}}$ of algebras called Cayley--Dickson algebras.
We remark that for $\ch(F)\ne{2}$ the algebra $A_1$ is isomorphic to $F\{ \gamma_0\}$, where $\gamma_0 = (4\mu + 1)/4 \neq 0$, so one can start the Cayley--Dickson process from $A_0=F$, with $a \mapsto \bar{a}$ being the identity map.

We recall a convenient notation for Cayley--Dickson algebras which extends the common notation used for quaternions and octonions over an arbitrary field
(see \cite[Pages 30--31]{SSSZ}). We write $A_n = [\mu, \gamma_1,\ldots,\gamma_{n-1})_F$ if $A_n = A_{n-1}\{\gamma_{n-1}\}$, $A_{n-1} = A_{n-2}\{\gamma_{n-2}\}$, and so on, until $A_1 = F[\ell_1 : \ell_1^2=\ell_1+\mu]$. If $\ch(F) \neq 2$, then $A_1 = F\{ \gamma_0\}$, so we can also write $A_n = (\gamma_0,\ldots,\gamma_{n-1})_F$. For example, $\mathbb{H}=(-1,-1)_{\mathbb{R}}$ and $\hat{\mathbb{H}} = (-1,1)_{\mathbb{R}}$. 
(See \cite{Schafer:1954}
for an alternative notation.)

All Cayley--Dickson algebras are quadratic, i.e., every nonscalar element $\lambda \in A_n$ together with the unit element $1$ generates a two-dimensional subalgebra containing $F$. More precisely, it satisfies $\lambda^2-\tr(\lambda)\lambda+\norm(\lambda)=0$, where the trace $\tr:A_n \rightarrow F$ is a linear map defined by $\lambda \mapsto \lambda + \overline{\lambda}$ and the norm $\norm: A_n \rightarrow F$ is a quadratic form defined by $\lambda \mapsto \overline{\lambda} \cdot \lambda$. The polynomial $p_{\lambda}(x) = x^2-\tr(\lambda)x+\norm(\lambda)$ is called the characteristic polynomial of $\lambda$. The trace and the norm can be computed inductively by using the following identities, see~\cite[p.~32, Theorem~1]{SSSZ} and~\cite[p.~435]{Schafer:1954}. Given $\alpha + \beta \ell_1 \in A_1$, we have
\begin{align*}
    \tr(\alpha + \beta \ell_1) &= 2 \alpha + \beta,\\
    \norm(\alpha + \beta \ell_1) &= \alpha^2 + \alpha \beta - \mu \beta^2,
\end{align*}
and for an arbitrary $(a,b) \in A_{n+1}$, $n \in \mathbb{N}$, we have
\begin{align*}
	\tr((a,b)) &= \tr(a),\\
	\norm((a,b)) &= \norm(a) - \gamma_{n} \norm(b).
\end{align*}
In the case of the real algebras of the main sequence, $\norm(\lambda)$ is the Euclidean norm, and thus it is anisotropic, i.e, $\norm(x) \neq 0$ for any $x \neq 0$, cf.~\cite[p.~5]{Moreno:1998}.


 \begin{prop}
 	A real Cayley--Dickson algebra $A$ is locally-complex if and only if $A$ is an algebra of the main sequence, i.e., $\gamma_0=\gamma_1=\ldots=-1$.
 \end{prop}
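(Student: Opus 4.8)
The plan is to reduce local-complexity to a positivity condition on the norm form and then read off the answer from the doubling formula $\norm((a,b)) = \norm(a) - \gamma_n\norm(b)$. Since over $\mathbb{R}$ we have $\ch(\mathbb{R}) = 0 \neq 2$, I would write $A = A_n = (\gamma_0,\dots,\gamma_{n-1})_{\mathbb{R}}$ throughout.

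First I would establish the reformulation: \emph{$A$ is locally-complex if and only if $\norm$ is positive definite on the trace-zero subspace $A^0 = \{\lambda \in A : \tr(\lambda) = 0\}$.} Indeed, every $\lambda \in A$ splits as $\lambda = \tfrac12\tr(\lambda) + \lambda_0$ with $\lambda_0 := \lambda - \tfrac12\tr(\lambda) \in A^0$ and $\overline{\lambda_0} = -\lambda_0$; feeding this into $\lambda^2 - \tr(\lambda)\lambda + \norm(\lambda) = 0$ gives $\lambda_0^2 = \tfrac14\tr(\lambda)^2 - \norm(\lambda) = -\norm(\lambda_0)$, so the characteristic polynomial of $\lambda$ has discriminant $\tr(\lambda)^2 - 4\norm(\lambda) = -4\norm(\lambda_0)$. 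Now $\lambda$ is nonscalar exactly when $\lambda_0 \neq 0$, and then the unital subalgebra it generates is $\mathbb{R} + \mathbb{R}\lambda \cong \mathbb{R}[x]/(x^2 - \tr(\lambda)x + \norm(\lambda))$; such a two-dimensional commutative associative unital $\mathbb{R}$-algebra is isomorphic to $\mathbb{C}$ precisely when this quadratic is irreducible over $\mathbb{R}$, i.e. when $\norm(\lambda_0) > 0$. Since $\lambda_0$ sweeps out all of $A^0 \setminus \{0\}$ as $\lambda$ ranges over the nonscalar elements, the reformulation follows.

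With this in hand, the ``if'' direction is quick: if every $\gamma_k = -1$ then the doubling formula collapses to $\norm_{A_{k+1}}((a,b)) = \norm_{A_k}(a) + \norm_{A_k}(b)$, so an easy induction starting from $\norm_{\mathbb{R}}(t) = t^2$ shows that $\norm$ on $A_n$ is the Euclidean norm, hence positive definite, hence positive definite on $A_n^0$. For the ``only if'' direction I would argue contrapositively. Over $\mathbb{R}$ one has $B\{s^2\gamma\} \cong B\{\gamma\}$ for $s \in \mathbb{R}^{\times}$ (the map $(a,b) \mapsto (a,sb)$ is an isomorphism of algebras with involution), and every real $\gamma \neq 0$ has the form $\pm(\text{square})$; composing such isomorphisms shows that $A_n$ is isomorphic to a main-sequence algebra exactly when all the $\gamma_k$ are negative. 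So suppose some $\gamma_m > 0$ with $0 \le m \le n-1$, and look at the element $u = (0, 1) \in A_{m+1} = A_m\{\gamma_m\}$, where $1$ is the unit of $A_m$. The doubling formulas give $\tr(u) = \tr_{A_m}(0) = 0$ and $\norm(u) = \norm_{A_m}(0) - \gamma_m\norm_{A_m}(1) = -\gamma_m < 0$, while $u \neq 0$. Transporting $u$ along the (trace- and norm-preserving) chain of embeddings $A_{m+1} \hookrightarrow \dots \hookrightarrow A_n$, $a \mapsto (a,0)$, we obtain a nonzero element $v \in A_n^0$ with $\norm(v) = -\gamma_m < 0$; by the reformulation $A_n$ is not locally-complex. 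Concretely, $v^2 = -\norm(v) = \gamma_m$, so $v$ generates $\mathbb{R}[x]/(x^2 - \gamma_m) \cong \mathbb{R} \times \mathbb{R}$, which is not isomorphic to $\mathbb{C}$.

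I expect the construction of the obstructing element $u = (0,1)$ to be the crux, and it falls right out of the norm formula. The only steps needing a little care are the sign bookkeeping in the reformulation (that the discriminant equals $-4\norm(\lambda_0)$ and that $\lambda_0$ ranges over all of $A^0 \setminus \{0\}$) and the observation that ``being an algebra of the main sequence'' is an isomorphism-class property governed by the signs of the parameters $\gamma_k$, which is what permits the normalization to $\gamma_k \in \{1,-1\}$.
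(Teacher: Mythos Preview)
Your proof is correct and follows essentially the same route as the paper: reduce local-complexity to positive definiteness of the norm form, normalize the parameters to $\pm 1$, and observe that any positive $\gamma_m$ spoils positive definiteness. The paper's version simply cites \cite[Example~4.4 and Lemma~4.1(iv)]{brevsar2011locally} and \cite[Exercise~2.5.1]{McCrimmon:2004} for these three steps, whereas you unpack them explicitly (the discriminant computation for the reformulation, the isomorphism $B\{s^2\gamma\}\cong B\{\gamma\}$ for the normalization, and the explicit witness $u=(0,1)$ for the failure of positivity); in substance the arguments coincide.
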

 
 \begin{proof}
 	The implication from right to left follows from~\cite[Example~4.4]{brevsar2011locally}. We now prove the implication from left to right. 
 	
 	 By~\cite[Exercise 2.5.1]{McCrimmon:2004}, any real Cayley--Dickson algebra $A_n = (\gamma_0, \dots, \gamma_{n-1})_{\mathbb{R}}$ is isomorphic to $A_n'= (\sgn(\gamma_0), \dots, \sgn(\gamma_{n-1}))_{\mathbb{R}}$.
 	 Hence it is sufficient to consider only $\gamma_k \in \{ \pm 1 \}$, $k = 0, \dots, n-1$.
 	
 	Assume to the contrary that there exists $k \in \{ 0, \dots, n - 1\}$ such that $\gamma_k = 1$.
 	Then the norm form on $A$ is not positive definite, so $A$ is not locally-complex by~\cite[Lemma~4.1(iv)]{brevsar2011locally}.
 \end{proof}
 
 For a locally-complex Cayley--Dickson algebra $A$ we define the real part of an element $\lambda\in A$ by $\Re(\lambda) = \frac{1}{2}\tr(\lambda)$, and the imaginary part of $\lambda$ is defined to be $\Im(\lambda)=\lambda - \Re(\lambda)$. The absolute value of $\lambda\in{A}$ is defined by $|\lambda|=\sqrt{\norm(\lambda)}$.

\subsection{Alternative elements in Cayley--Dickson algebras}

Cayley--Dickson algebras $A_n$ are power-associative, that is, the subalgebra generated by any element is associative, and flexible, i.e.,  satisfy $(xy)x = x(yx)$ for all $x,y \in A_n$, see~\cite[p.~436]{Schafer:1954}. Moreover, up to octonions, the algebras $A_n$ are composition algebras, i.e., the norm form is multiplicative, but the rest of them are not, see~\cite[p.~164, Theorem~2.6.1]{McCrimmon:2004}. In addition, up to octonion algebras, the algebras $A_n$ are division if and only if the norm form is anisotropic,
but the rest are never division algebras (i.e., they contain zero divisors), even when the norm form is anisotropic.

We define the standard basis $L_n = \{ \ell_m^{(n)} \; | \; m = 0 , \dots, 2^n - 1\}$ of $A_n$ inductively:
\begin{enumerate}[(1)]
    \item $\ell_0^{(1)} = 1$ and $\ell_1^{(1)} = \ell_1$;
    \item $\ell_m^{(n+1)} = \begin{cases}
    (\ell_m^{(n)}, 0), & 0 \leq m \leq 2^n - 1,\\
    (0, \ell_{m - 2^n}^{(n)}), & 2^n \leq m \leq 2^{n+1} - 1,
    \end{cases}
    \quad n \in \mathbb{N}$.
\end{enumerate}
If $\ch(F) \neq 2$, then there is also another standard basis $E_n = \{ e_m^{(n)} \; | \; m = 0 , \dots, 2^n - 1\}$ of~$A_n$:
\begin{enumerate}[(1)]
    \item $e_0^{(0)} = 1$;
    \item $e_m^{(n+1)} = \begin{cases}
    (e_m^{(n)}, 0), & 0 \leq m \leq 2^n - 1,\\
    (0, e_{m - 2^n}^{(n)}), & 2^n \leq m \leq 2^{n+1} - 1,
    \end{cases}
    \quad n \in \mathbb{N}_{0}$.
\end{enumerate}
We will often omit upper indices when they are clear from the context. It can be easily seen that $\ell_0^{(n)} = e_0^{(n)}$ is the unit element of $A_n$, so we will also denote $\ell_0^{(n)} = e_0^{(n)} = 1$. The elements of $E_n \setminus \{ 1 \}$ anticommute pairwise, that is, $e_k^{(n)} e_m^{(n)} = - e_m^{(n)} e_k^{(n)}$ for $1 \leq k < m \leq 2^n - 1$, hence it is more natural to use $E_n$ rather than $L_n$ if possible. Therefore, we will use the basis $L_n$ whenever $F$ has arbitrary characteristic, whereas $E_n$ will be used mostly for real Cayley--Dickson algebras.

An element $a \in A_n$ is called alternative if $a(ab) = a^2 b$ and $(ba)a = b a^2$ for all $b \in A_n$. The next proposition implies that all elements of $E_n$ and $L_n$ are alternative (see also~\cite[Lemma~4]{Schafer:1954} for a different proof of alternativity of $E_n$).

\begin{prop} \label{prop:inherit-alternativity}
Let $a,b \in A_n$ be alternative. Assume also that either at least one of them belongs to $F$, or $F + Fa = F + Fb$. Then $(a,b)$ is alternative in $A_{n+1}$.
\end{prop}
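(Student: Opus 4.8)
The plan is to verify the two defining identities for alternativity of $(a,b)$ in $A_{n+1}$ directly from the Cayley--Dickson multiplication formula, reducing everything to identities in $A_n$ that are already available by hypothesis. Recall that for $x = (a,b)$ and $y = (c,d)$ in $A_{n+1}$ we have $xy = (ac + \gamma_n \bar d b, da + b\bar c)$; I would fix a general element $y = (c,d) \in A_{n+1}$ and compute both $x(xy)$ and $x^2 y$ as explicit pairs in $A_n \times A_n$, then compare the two coordinates. Note first that $x^2 = (a^2 + \gamma_n \bar b b, ba + b\bar a) = (a^2 + \gamma_n\norm_{A_n}(b)\cdot 1, b\,\tr_{A_n}(a))$ when $a$ is alternative — actually more carefully $x^2 = (a^2 + \gamma_n \bar b b, \, b a + b \bar a)$ in general, and $a^2 + \gamma_n \bar b b$ lies in $F + Fa + Fb$ while the second coordinate is a scalar multiple of $b$. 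So in both cases covered by the hypothesis, the components of $x^2$ lie in a controlled subalgebra.

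First I would treat the case in which one of $a, b$ is scalar, say $b \in F$ (the case $a \in F$ being easier, since then $x = (a,b)$ with $a$ scalar reduces $A_{n+1}$-multiplication by $x$ essentially to left/right multiplication controlled by $F + Fb$, and $b$ is alternative by assumption). With $b \in F$, expand $xy = (ac + \gamma_n b\bar d, da + b\bar c)$ — using $\bar b = b$, $\bar d b = b \bar d$, $b \bar c = \bar c b$, etc. — and then apply $x = (a,b)$ on the left once more. Collecting terms, the first coordinate of $x(xy) - x^2 y$ becomes a combination of associators $[a,a,c]$, $[a,a,\bar d]$ (which vanish because $a$ is alternative, hence $a(ac) = a^2 c$), together with terms linear in the scalar $b$ that rearrange freely; similarly for the second coordinate, where one uses $a(\overline{ac}) $ type expressions and the alternativity $(\,\cdot\,)a)a = (\cdot)a^2$ in the flexible law form. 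Every obstruction term is an associator with a repeated entry $a$, which vanishes by alternativity of $a$.

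For the main case $F + Fa = F + Fb$, the same expansion produces, after cancellation, associators of the shape $[a,a,\cdot]$, $[a, b, \cdot]$, $[b,a,\cdot]$, $[b,b,\cdot]$ and their conjugates, where the middle or outer repeated argument ranges over $\{a,b\}$. The terms $[a,a,\cdot]$ and $[b,b,\cdot]$ vanish because $a$ and $b$ are each alternative. The mixed terms $[a,b,\cdot]$ and $[b,a,\cdot]$ are handled by observing that $b = \alpha + \beta a$ for scalars $\alpha,\beta \in F$ (this is exactly what $F + Fa = F + Fb$ buys us, provided $b \notin F$; if $b \in F$ we are back in the previous case), so every associator with an entry $b$ expands $F$-linearly into associators with that entry replaced by $a$ or by $1$, and associators with a scalar entry are zero, while the ones with a repeated $a$ vanish by alternativity. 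Hence both coordinates of $x(xy) - x^2 y$ vanish, and symmetrically $(yx)x = y x^2$; by flexibility of $A_{n+1}$ (noted in the excerpt) one of the two identities in fact follows from the other, so it suffices to check $x(xy) = x^2 y$.

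The main obstacle I anticipate is purely bookkeeping: the Cayley--Dickson product mixes $a$, $b$, their conjugates, and $\gamma_n$ in both coordinates, so writing $x(xy)$ out honestly gives on the order of a dozen terms per coordinate, and one must keep careful track of which conjugations land where before invoking the alternativity/flexibility relations in $A_n$. There is no conceptual difficulty — every term that fails to cancel is an associator killed by a hypothesis — but organizing the cancellation cleanly (ideally by first recording the two sub-identities ``$u$ alternative $\Rightarrow$ $u(u w) = u^2 w$ and $(w u) u = w u^2$ in $A_n$'' and ``associators are alternating and $F$-multilinear'') is where the care is needed. I would structure the write-up around the substitution $b = \alpha + \beta a$ to collapse the mixed case into the alternativity of $a$ alone.
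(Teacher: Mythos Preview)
Your proposal is correct and follows the same route as the paper: expand $x(xy)$ and $x^2y$ via the Cayley--Dickson product and reduce the difference to associators in $A_n$ that vanish under the hypotheses. The paper's one-line proof packages the key consequence of the hypothesis as the single identity $a(cb)=(ac)b$ for all $c\in A_n$ (i.e.\ $[a,c,b]=0$, which follows from $b\in F+Fa$ together with flexibility of $A_n$, or trivially when one of $a,b$ is scalar), and then defers the remaining bookkeeping to a direct computation and the references \cite{biss2008large,moreno2006alternative}; your substitution $b=\alpha+\beta a$ is exactly the mechanism behind that identity, and your use of flexibility of $A_{n+1}$ to derive $(yx)x=yx^2$ from $x(xy)=x^2y$ is legitimate since $[x,y,z]+[z,y,x]=0$ in any flexible algebra. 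One small refinement: in the actual expansion the obstruction terms are associators with two arguments drawn from $\{a,\bar a,b,\bar b\}$ in \emph{any} two of the three slots (in particular $[a,\cdot,b]$-type terms, not only $[a,b,\cdot]$), but since $\bar a,\bar b\in F+Fa$ as well, your substitution together with alternativity of $a$ and flexibility of $A_n$ kills all of them just the same.
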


\begin{proof}
The condition that $a \in F$, $b \in F$, or $F + Fa = F + Fb$ guarantees that $a(cb) = (ac)b$ for all $c \in A_n$. Then the statement follows by direct computation, see also~\cite[Lemma~4.4]{biss2008large}  and~\cite[Theorem~3.3]{moreno2006alternative} where the case of the real algebras of the main sequence was considered.
\end{proof}

\begin{cor} \label{cor:alternative-basis}
All elements of $E_n$ and $L_n$ are alternative in $A_n$. Moreover, any element of the form $\alpha e_{2k}^{(n)} + \beta e_{2k+1}^{(n)}$ or $\alpha \ell_{2k}^{(n)} + \beta \ell_{2k+1}^{(n)}$, where $0 \leq k \leq 2^{n-1} - 1$ and $\alpha, \beta \in F$, is alternative in $A_n$.
\end{cor}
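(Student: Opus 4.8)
The plan is to prove the second statement of the corollary by induction on $n$, and then to read off the first statement as a special case: every element of the standard basis is of the form $\alpha e_{2k}^{(n)}+\beta e_{2k+1}^{(n)}$ (resp.\ $\alpha \ell_{2k}^{(n)}+\beta \ell_{2k+1}^{(n)}$) with $0\leq k\leq 2^{n-1}-1$, since $e_{2k}^{(n)}$ arises with $(\alpha,\beta)=(1,0)$ and $e_{2k+1}^{(n)}$ with $(\alpha,\beta)=(0,1)$; moreover $e_0^{(n)}=\ell_0^{(n)}=1\in F$ is alternative in any case. So it suffices to treat the pairs.

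For the base case, $A_0=F$ and $A_1$ are commutative and associative, so each of their elements is alternative; in particular the only admissible pair in $A_1$ (here $k=0$, since $2^{n-1}-1=0$), namely $\alpha\ell_0^{(1)}+\beta\ell_1^{(1)}=\alpha+\beta\ell_1$, is alternative, and the same holds for $E_1$ when $\ch(F)\neq 2$.

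For the inductive step, assume $n\geq 1$ and that the claim holds in $A_n$, and fix $v=\alpha e_{2k}^{(n+1)}+\beta e_{2k+1}^{(n+1)}$ with $0\leq k\leq 2^n-1$. Since $k$ is an integer, either $k\leq 2^{n-1}-1$, in which case $2k+1\leq 2^n-1$ and the defining recursion gives $v=(a,0)$ with $a=\alpha e_{2k}^{(n)}+\beta e_{2k+1}^{(n)}\in A_n$; or $k\geq 2^{n-1}$, in which case $2k\geq 2^n$ and $v=(0,b)$ with $b=\alpha e_{2k'}^{(n)}+\beta e_{2k'+1}^{(n)}\in A_n$ for $k'=k-2^{n-1}\in\{0,\dots,2^{n-1}-1\}$. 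In either case the nonzero component is an element of $A_n$ covered by the inductive hypothesis, hence alternative, while the other component is $0\in F$, which is alternative. Therefore \Pref{prop:inherit-alternativity} applies (its hypothesis holds because one of the two arguments lies in $F$) and shows that $v$ is alternative in $A_{n+1}$. The argument for $L_n$ is word for word the same.

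I expect no serious obstacle; the only point that needs care is the dichotomy on $k$ used in the inductive step. It works because $2^n-1$ is odd for $n\geq 1$, so a consecutive pair $\{2k,2k+1\}$ cannot straddle the cut between the two halves $\{0,\dots,2^n-1\}$ and $\{2^n,\dots,2^{n+1}-1\}$ of the index set of the basis of $A_{n+1}$. The single configuration where such straddling does occur — $e_0^{(1)}=(1,0)$ together with $e_1^{(1)}=(0,1)$ inside $A_1$ — is exactly the base case, which is why it must be disposed of directly, using the commutativity and associativity of $A_1$ rather than \Pref{prop:inherit-alternativity}.
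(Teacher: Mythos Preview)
Your proof is correct and follows essentially the same approach as the paper: both argue by induction on $n$, observe that a consecutive pair $\{2k,2k+1\}$ lands entirely in one half of the basis so that $\alpha e_{2k}^{(n)}+\beta e_{2k+1}^{(n)}$ has the form $(a,0)$ or $(0,b)$ with $a,b$ of the same shape one level down, and then invoke \Pref{prop:inherit-alternativity}. The paper's version is terser (it simply displays the recursive formula and appeals to the proposition), while you spell out the base case and the reason the dichotomy cannot fail, but the substance is identical.
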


\begin{proof}
It is clear that for all $n \geq 2$ we have
$$
\alpha \ell_{2k}^{(n)} + \beta \ell_{2k+1}^{(n)} = 
\begin{cases}
(\alpha \ell_{2k}^{(n-1)} + \beta \ell_{2k+1}^{(n-1)}, 0), & 0 \leq k \leq 2^{n-2} - 1,\\
(0, \alpha \ell_{2(k - 2^{n-2})}^{(n-1)} + \beta \ell_{2(k - 2^{n-2})+1}^{(n-1)}), & 2^{n-2} \leq k \leq 2^{n-1} - 1,
\end{cases}
$$
and a similar expression is valid for $\alpha e_{2k}^{(n)} + \beta e_{2k+1}^{(n)}$. Hence the statement follows from Proposition~\ref{prop:inherit-alternativity} by induction on $n$.
\end{proof}

\begin{rem}
Proposition~\ref{prop:inherit-alternativity} provides a method to obtain many other alternative elements in an arbitrary Cayley--Dickson algebra. In particular, any element of the form $\alpha e_k^{(n)} + \beta e_{k + 2^{n-1}}^{(n)} = (\alpha e_k^{(n-1)}, \beta e_k^{(n-1)})$ or $\alpha \ell_k^{(n)} + \beta \ell_{k + 2^{n-1}}^{(n)} = (\alpha \ell_k^{(n-1)}, \beta \ell_k^{(n-1)})$, where $0 \leq k \leq 2^{n-1} - 1$ and $\alpha, \beta \in F$, is also alternative in $A_n$.

However, we are specially interested in the elements of the form $\alpha \ell_{2k}^{(n)} + \beta \ell_{2k+1}^{(n)}$, since for any $\lambda = \sum_{m=0}^{2^n-1} \lambda_m \ell_m^{(n)}$ we have $\norm(\lambda) = \sum_{k=0}^{2^{n-1}-1} \norm(\lambda_{2k} \ell_{2k}^{(n)} + \lambda_{2k+1} \ell_{2k+1}^{(n)})$, and $\norm(\lambda_{2k} \ell_{2k}^{(n)} + \lambda_{2k+1} \ell_{2k+1}^{(n)})$ can be easily computed from $\norm(\lambda_{2k} + \lambda_{2k+1} \ell_1)$ in $A_1$, see Lemma~\ref{lem:if-spherical} below.
\end{rem}

\subsection{Cayley--Dickson polynomials}

Let $A$ be a Cayley--Dickson algebra over a field $F$. The ring of polynomials $A[x]$ over $A$ is defined to be $A \otimes_F F[x]$. Note that we have the chain of rings $A \subset A[x] \subset A \otimes_F F(x)$, where $A \otimes_F F(x)$ is
the Cayley--Dickson algebra over the function field $F(x)$ in one indeterminate over $F$. The element $x$ is therefore central in $A[x]$, so it commutes and alternates with all other elements. Therefore, every polynomial $f(x)$ in $A[x]$ can be written as $f(x)=a_n x^n+\dots+a_1 x+a_0$, by placing the coefficient on the left-hand side of the indeterminate in each monomial. The involution $a \mapsto \bar{a}$ extends to $A[x]$ by acting trivially on $x$, that is, $\overline{f(x)} = \overline{a}_n x^n + \dots + \overline{a}_1 x + \overline{a}_0$.
We define the substitution of an element $\lambda$ from $A$ in $f(x)$ by $f(\lambda)=a_n (\lambda^n)+\dots+a_1 \lambda+a_0$. The substitution is well-defined because $A$ is power-associative. Note, however, that we cannot expect in general for $f(x)=g(x)h(x)$ to imply $f(\lambda)=g(\lambda)h(\lambda)$ for $\lambda\in{A}$.

We define the companion polynomial $C_f(x)$ of $f(x)$ to be $\norm(f(x))=\overline{f(x)}\cdot f(x)$. The companion polynomial $C_f(x)$ is in $F[x]$, i.e., has central coefficients. 

\begin{lem} {\rm \cite[Theorem~3.3]{Chapman:2020a}, \cite[Theorem~3.6]{ChapmanMachen:2017}} \label{lem:companion-octonion}
	Let $A$ be a quaternion or octonion algebra, and let $f(x)\in{A[x]}$, then any root of $f(x)$ is also a root of the companion polynomial $C_f(x)$.
\end{lem}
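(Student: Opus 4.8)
The plan is to reproduce the argument of the cited references \cite{Chapman:2020a,ChapmanMachen:2017}, whose core idea is to factor out the root and then invoke multiplicativity of the norm form. Suppose $f(\lambda) = 0$ for some $\lambda \in A$. The first step is a factor theorem: since $x$ is central in $A[x]$, the ordinary division algorithm (dividing by the monic $x - \lambda$ from the right) produces $f(x) = g(x)(x - \lambda) + r$ with $g(x) \in A[x]$ and $r \in A$ a constant. One then checks that the polynomial $g(x)(x-\lambda)$ vanishes upon substituting $\lambda$: writing $g(x) = \sum_i b_i x^i$, substitution gives $\sum_i \big(b_i \lambda^{i+1} - (b_i\lambda)\lambda^i\big)$, and each term is zero because $\lambda$ is alternative, so by Artin's theorem $b_i$ and $\lambda$ generate an associative subalgebra and $(b_i\lambda)\lambda^i = b_i(\lambda\lambda^i) = b_i\lambda^{i+1}$. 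Hence $r = f(\lambda) = 0$, and $f(x) = g(x)(x - \lambda)$ in $A[x]$.

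The second step is to pass to $A \otimes_F F(x)$, which for a quaternion or octonion algebra $A$ is again a composition algebra over $F(x)$, since the defining feature — a nondegenerate multiplicative norm form — is preserved under extension of the base field. Applying the (now multiplicative) norm to the factorization yields
\[
C_f(x) = \norm(f(x)) = \norm(g(x))\,\norm(x - \lambda) = C_g(x)\cdot p_\lambda(x),
\]
where $p_\lambda(x) = (x - \overline{\lambda})(x - \lambda) = x^2 - \tr(\lambda)x + \norm(\lambda)$ is the characteristic polynomial of $\lambda$. Both $C_g(x)$ and $p_\lambda(x)$ have coefficients in $F$, so substitution is multiplicative on their product (all the products occur inside the associative, commutative subalgebra generated by $\lambda$), giving $C_f(\lambda) = C_g(\lambda)\,p_\lambda(\lambda)$. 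Since $\lambda$ satisfies its characteristic polynomial, $p_\lambda(\lambda) = 0$, and therefore $C_f(\lambda) = 0$, as required.

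The only real care needed is the bookkeeping around non-associativity: one must verify that $g(x)(x-\lambda)$ genuinely evaluates to $0$ at $\lambda$ (this is precisely where power-associativity/alternativity is used) and that base change to $F(x)$ keeps $A$ a composition algebra (so that $\norm$ stays multiplicative). I expect this to be the main, though not difficult, obstacle. It is also instructive to note that both ingredients break down beyond octonions — Artin's theorem and multiplicativity of the norm both fail for $A_n$ with $n \geq 4$ — which is exactly why, in that generality, only the spherical roots rather than all roots of $f(x)$ can be recovered from $C_f(x)$.
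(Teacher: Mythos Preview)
The paper does not supply its own proof of this lemma; it simply cites \cite[Theorem~3.3]{Chapman:2020a} and \cite[Theorem~3.6]{ChapmanMachen:2017} and moves on. Your argument is correct and is precisely the one in those references: factor out $x-\lambda$ on the right via the division algorithm (the vanishing of $g(x)(x-\lambda)$ at $\lambda$ being exactly where alternativity, via Artin's theorem, is used), then apply multiplicativity of the norm on the composition algebra $A \otimes_F F(x)$ to obtain $C_f(x) = C_g(x)\,p_\lambda(x)$, and evaluate at $\lambda$. Your closing remark that both ingredients fail for $A_n$, $n\geq 4$, is also apt and matches the paper's Example~\ref{exmpl:companion-fails}.
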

The latter lemma is false for higher-dimensional Cayley--Dickson algebras as the following example shows:
\begin{exmpl}\label{exmpl:companion-fails}
	Consider $A=\mathbb{S}$, the real sedenion algebra with the standard basis $\{ e_m \; | \; m = 0, \dots, 15\}$. Set $a = e_1 + e_{10}$ and $b = e_7 + e_{12}$.
	Then $ab=0$ (see~\cite{Moreno:1998}).
	The polynomial $f(x)=ax$ has $b$ as a root, whereas the only root of $C_f(x)=\norm(a)x=2x^2$ is zero.
Lemma~\ref{lem:companion-octonion} does not hold for monic polynomials either. Indeed, one can consider a monic polynomial $g(x) = x^2 + ax + 2$, and then $C_g(x) = x^4 + 6 x^2 + 4$. Since $b^2 = -2$, we have $g(b) = 0$ but $C_g(b) \neq 0$.
\end{exmpl}

\section{Spherical roots of Cayley--Dickson polynomials} \label{sec:roots}

Let $A$ be a Cayley--Dickson algebra over a field $F$.
A root $\lambda\in{A\setminus F}$ of $f(x)\in {A[x]}$ is called a ``spherical root'' of $f(x)$ if all the elements $r \in A$ satisfying
$p_{\lambda}(r) = 0$ (where $p_\lambda(x) = x^2-\tr(\lambda)x+\norm(\lambda)$ is the characteristic polynomial of $\lambda$),  are also roots of $f(x)$. In the special case of quaternion and octonion division algebras, a root $\lambda\notin{F}$ is spherical if and only if all elements in the conjugacy class of $\lambda$ are roots of $f(x)$ (see \cite{Chapman:2020a}).

\begin{prop} \label{prop:roots-from-field}
Let $A$ be a Cayley--Dickson algebra such that the norm form $\norm(a)$ is anisotropic. If $\lambda\in F$,
then $\lambda$ is the unique root of the quadratic polynomial $p_{\lambda}(x)$.
\end{prop}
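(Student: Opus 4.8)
The plan is to reduce the statement to the elementary fact that a Cayley--Dickson algebra whose norm form is anisotropic contains no nonzero square-zero element, even though it may well contain zero divisors.

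First I would compute $p_\lambda$. Since the involution fixes $F$ pointwise, for $\lambda \in F$ we have $\tr(\lambda) = 2\lambda$ and $\norm(\lambda) = \lambda^2$, whence
\[
  p_\lambda(x) = x^2 - 2\lambda x + \lambda^2 .
\]
As $\lambda$ is central in $A$ (so that it commutes and alternates with every element), for any $r \in A$ one gets $p_\lambda(r) = r^2 - 2\lambda r + \lambda^2 = (r-\lambda)^2$. Thus $r$ is a root of $p_\lambda$ precisely when $y := r - \lambda$ satisfies $y^2 = 0$; in particular $\lambda$ itself is a root, so it remains to prove that $y^2 = 0$ forces $y = 0$.

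For this I would invoke that $A$ is quadratic: $y^2 - \tr(y)\, y + \norm(y) = 0$, which together with $y^2 = 0$ gives $\tr(y)\, y = \norm(y)\cdot 1$. If $y$ is a scalar, then $y^2 = 0$ holds in the field $F$, so $y = 0$. If $y$ is nonscalar, then $1$ and $y$ are linearly independent over $F$, so comparing coefficients in $\tr(y)\, y - \norm(y)\cdot 1 = 0$ forces $\norm(y) = 0$; anisotropy of the norm then yields $y = 0$, contradicting that $y$ is nonscalar. Hence $y = 0$, i.e. $r = \lambda$, so $\lambda$ is the unique root of $p_\lambda$.

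I do not expect a genuine obstacle here. The only points needing care are the expansion $(r-\lambda)^2 = r^2 - 2\lambda r + \lambda^2$, which is legitimate precisely because $\lambda \in F$ is central, and the passage from ``nonscalar'' to ``linearly independent from $1$'', which is what lets one separate the two coefficients in the quadratic relation.
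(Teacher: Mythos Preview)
Your proof is correct and follows essentially the same route as the paper: both reduce to showing that $(r-\lambda)^2=0$ forces $r=\lambda$ via anisotropy of the norm. The only cosmetic difference is that where you invoke the quadratic identity $y^2-\tr(y)y+\norm(y)=0$ together with linear independence of $1$ and $y$, the paper instead multiplies $(r-\lambda)^2=0$ by $\overline{r-\lambda}$ inside the commutative associative subalgebra $F[r]$ to obtain $\norm(r-\lambda)\,(r-\lambda)=0$.
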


\begin{proof}
Since $\lambda \in F$, we have $p_{\lambda}(x) = x^2-2\lambda{x}+\lambda^2=(x-\lambda)^2$.
If $r$ is a root of $p_{\lambda}(x)$, then it is contained in a two-dimensional subalgebra $F[r]$ of $A$ which is commutative and associative. In this subalgebra $(r-\lambda)^2=0$ implies $\norm(r - \lambda) \cdot (r - \lambda) = \overline{(r - \lambda)}(r - \lambda)^2=0$. Since $\norm(r - \lambda) \neq 0$ whenever $r - \lambda \neq 0$, this is only possible for $r = \lambda$.
\end{proof}

\begin{exmpl} \label{exmpl:roots-from-field}
Proposition~\ref{prop:roots-from-field} does not hold for general Cayley--Dickson algebras, since it is possible that $(r - \lambda)^2 = 0$ for $r - \lambda \neq 0$. For example, one can consider the element $x = e_1 + e_2 \in \hat{\mathbb{H}}$ which satisfies $\tr(x) = \norm(x) = 0$, and thus $x^2 = 0$.
\end{exmpl}

In order to characterize spherical roots of polynomials over Cayley--Dickson algebras, we define an equivalence relation on the Cayley--Dickson algebra $A$. We say that $r\in A$ is quadratically-equivalent to $\lambda\in A$ if the quadratic polynomials 
$p_r(x)$ and $p_{\lambda}(x)$ coincide, that is, $\tr(r)=\tr(\lambda)$ and $\norm(r)=\norm(\lambda)$. Clearly, quadratic-equivalence is an equivalence relation indeed. Moreover, we see that $\lambda\in A \setminus F$ is a spherical root of $f\in{A[x]}$ if and only if the quadratic-equivalence class of $\lambda$
is contained in the set of roots of $f(x)$.

\begin{prop}
\label{prop:3.3}
If $A$ is a Cayley--Dickson algebra, $\lambda \in A$, and $r\in A\setminus{F}$,
then $r$ and $\lambda$ are quadratically-equivalent if and only if $p_{\lambda}(r) = 0$.
\end{prop}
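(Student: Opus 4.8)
The plan is to prove both implications directly from the defining quadratic identity satisfied by elements of a quadratic algebra. Recall that every element $\mu \in A$ satisfies $\mu^2 - \tr(\mu)\mu + \norm(\mu) = 0$, and that $A$ is power-associative, so substitution into quadratic polynomials with central coefficients behaves as expected. For the forward direction, suppose $r$ and $\lambda$ are quadratically-equivalent, so $\tr(r) = \tr(\lambda)$ and $\norm(r) = \norm(\lambda)$, whence $p_\lambda(x) = p_r(x)$ as polynomials in $F[x]$. Then $p_\lambda(r) = p_r(r) = r^2 - \tr(r)r + \norm(r) = 0$ by the characteristic identity for $r$; this direction uses neither the hypothesis $r \notin F$ nor any anisotropy assumption.

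For the converse, suppose $p_\lambda(r) = 0$, i.e. $r^2 - \tr(\lambda)r + \norm(\lambda) = 0$. Since $r \notin F$, the element $r$ together with $1$ generates a genuine two-dimensional commutative associative subalgebra $F[r] = F + Fr$ of $A$, and inside $F[r]$ the set $\{1, r\}$ is an $F$-basis. On the other hand, the characteristic identity for $r$ gives $r^2 - \tr(r)r + \norm(r) = 0$. Subtracting the two relations yields $\bigl(\tr(r) - \tr(\lambda)\bigr) r + \bigl(\norm(\lambda) - \norm(r)\bigr) \cdot 1 = 0$. Because $1$ and $r$ are linearly independent over $F$, both coefficients must vanish: $\tr(r) = \tr(\lambda)$ and $\norm(r) = \norm(\lambda)$. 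Hence $p_r(x) = p_\lambda(x)$, i.e. $r$ and $\lambda$ are quadratically-equivalent.

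The only subtlety — and the step I expect to require the most care — is the claim that $r \notin F$ forces $\{1, r\}$ to be $F$-linearly independent, equivalently that $F[r]$ is genuinely two-dimensional rather than collapsing; this is exactly where the hypothesis $r \in A \setminus F$ is used, and it is the reason the statement excludes $r \in F$ (as Example \ref{exmpl:roots-from-field} shows, for $r \in F$ one can have $p_\lambda(r) = 0$ with $r \neq \lambda$, so the equivalence genuinely fails there). Everything else is a one-line manipulation of the two quadratic relations, so no serious obstacle remains beyond bookkeeping with the characteristic identity.
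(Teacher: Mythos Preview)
Your proof is correct and follows essentially the same route as the paper: subtract the characteristic identity $p_r(r)=0$ from the hypothesis $p_\lambda(r)=0$ and use that $r\notin F$ means $\{1,r\}$ is $F$-linearly independent. One minor slip in your aside: the example illustrating failure for $r\in F$ is Example~\ref{exmpl:quadratic-equivalence}, not Example~\ref{exmpl:roots-from-field} (the latter has $r\notin F$ and in fact $r$ there \emph{is} quadratically-equivalent to $0$).
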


\begin{proof}
The implication from left to right is straightforward. Assume now that $p_{\lambda}(r) = r^2 - \tr(\lambda) r + \norm(\lambda) = 0$. We also have $p_r(r) = r^2 - \tr(r) r + \norm(r) = 0$, so we obtain $(\tr(r) - \tr(\lambda))r - (\norm(r) - \norm(\lambda)) = 0$. Since $r \notin F$, it follows that $\tr(r)=\tr(\lambda)$ and $\norm(r)=\norm(\lambda)$, and thus $r$ and $\lambda$ are quadratically-equivalent.
\end{proof}

\begin{exmpl} \label{exmpl:quadratic-equivalence}
We cannot allow $r$ to be in $F$ for the following reason:
consider the algebra $\hat{\mathbb{H}} \cong M_2(\mathbb{R})$.
In this algebra, one can take $\lambda=\left(\begin{array}{lr}
t & 0\\
0 & s
\end{array}\right)$ and $r=\left(\begin{array}{lr}
t & 0\\
0 & t
\end{array}\right)$ for two different real numbers $t$ and $s$.
Then $p_{\lambda}(x) = (x-s)(x-t)$ is not equal to $p_r(x) = (x-t)^2$, however, $p_\lambda(r)=0$.
\end{exmpl}


When $A = A_n$ is a quaternion or octonion division algebra over an arbitrary field $F$ (i.e., $n \in \{ 2, 3\}$, and the norm on $A$ is anisotropic), the quadratic-equivalence class of $\lambda$ coincides with the conjugacy class of~$\lambda$, that is, the set of elements of the form $h \lambda h^{-1}$ with $h \in A_n$ being invertible, see~\cite[Remarks~3.1 and~5.3]{Chapman:2020a}. Clearly, this is not true for $n = 1$, since for any $a \in A_1$ the elements $a$ and $\bar{a}$ are always quadratically-equivalent, but they are not conjugate unless $a = \bar{a}$, due to commutativity and associativity of $A_1$. As the following example shows, this is not the case for $n \geq 4$ either, though the expression $h \lambda h^{-1}$ is well-defined for any $h \in A_n$ such that $\norm(h) \neq 0$, since $h^{-1} = (\norm(h))^{-1} \bar{h}$ and $A_n$ is flexible.

\begin{exmpl}
Let $A$ be at least $16$-dimensional real locally-complex Cayley--Dickson algebra. Consider $a = e_1 + e_{10}$ and $b = e_7 + e_{12}$ from Example~\ref{exmpl:companion-fails}. Then $bab^{-1} = 0$, but $\norm(a) = 2 \neq 0 = \norm(0)$, so $a$ and $0$ are not quadratically-equivalent.
\end{exmpl}

For locally-complex Cayley--Dickson algebras we can say more about spherical roots. We begin with the following characterizations of quadratic-equivalence, which are generalizations of well known facts about conjugacy classes in real quaternions (see, e.g., the survey article \cite[Theorem 2.2]{zhang:1997}) and octonions (\cite[Theorem 3.4]{Chapman:2020a}). 

\begin{prop} \label{prop:quadratic-equivalence-LCCDA}
	Let $A$ be a locally-complex Cayley--Dickson algebra, and $\lambda\in{A}$. Then $r$ and $\lambda$ are quadratically-equivalent if and only if $\Re(r)=\Re(\lambda)$ and $|\Im(r)|=|\Im(\lambda)|$.
\end{prop}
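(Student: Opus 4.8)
The plan is to reduce the statement to the defining equalities $\tr(r)=\tr(\lambda)$ and $\norm(r)=\norm(\lambda)$ and then translate each into a condition on real and imaginary parts. First recall that in a locally-complex Cayley--Dickson algebra $A$ we have $\Re(\mu)=\tfrac12\tr(\mu)$ and $\Im(\mu)=\mu-\Re(\mu)$ for any $\mu\in A$, and $|\mu|=\sqrt{\norm(\mu)}$ (the norm is positive definite here, by the Proposition characterizing locally-complex algebras, so this is well-defined). Thus $\tr(r)=\tr(\lambda)$ is literally the same as $\Re(r)=\Re(\lambda)$, which handles one of the two conjunctions on each side. It therefore suffices to prove that, \emph{given} $\Re(r)=\Re(\lambda)$, one has $\norm(r)=\norm(\lambda)$ if and only if $|\Im(r)|=|\Im(\lambda)|$.

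The key computation is a ``Pythagoras'' identity: for any $\mu\in A$, $\norm(\mu)=\Re(\mu)^2+\norm(\Im(\mu))$, equivalently $|\mu|^2=\Re(\mu)^2+|\Im(\mu)|^2$. To see this, write $\mu=\Re(\mu)\cdot 1+\Im(\mu)$ with $\Re(\mu)\in\mathbb{R}$; since $\Im(\mu)$ and $1$ generate a commutative associative subalgebra, $\norm$ restricted there behaves like the complex norm, and $\tr(\Im(\mu))=\tr(\mu)-2\Re(\mu)=0$ gives $\overline{\Im(\mu)}=-\Im(\mu)$, so $\norm(\mu)=\overline{\mu}\mu=(\Re(\mu)-\Im(\mu))(\Re(\mu)+\Im(\mu))=\Re(\mu)^2-\Im(\mu)^2=\Re(\mu)^2+\norm(\Im(\mu))$, using $\Im(\mu)^2=-\norm(\Im(\mu))$ from the characteristic polynomial of $\Im(\mu)$ together with $\tr(\Im(\mu))=0$. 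Apply this identity to both $r$ and $\lambda$: under the assumption $\Re(r)=\Re(\lambda)$, the equality $\norm(r)=\norm(\lambda)$ is equivalent to $\norm(\Im(r))=\norm(\Im(\lambda))$, i.e.\ (by positive definiteness) to $|\Im(r)|=|\Im(\lambda)|$. Combining, $r$ and $\lambda$ are quadratically-equivalent $\iff$ $\tr(r)=\tr(\lambda)$ and $\norm(r)=\norm(\lambda)$ $\iff$ $\Re(r)=\Re(\lambda)$ and $|\Im(r)|=|\Im(\lambda)|$, which is the claim.

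I do not expect a serious obstacle here; the statement is essentially a bookkeeping translation once the Pythagoras identity is in place. The one point requiring a little care is justifying that $\norm$, $\tr$, and squaring interact correctly on the two-dimensional subalgebra $\mathbb{R}+\mathbb{R}\Im(\mu)$ generated by a single element $\Im(\mu)$ — this is exactly the quadraticity of Cayley--Dickson algebras recalled in Section~\ref{sec:prelim} ($\Im(\mu)^2-\tr(\Im(\mu))\Im(\mu)+\norm(\Im(\mu))=0$), plus the fact that the trace is additive and kills the scalar part correctly so that $\tr(\Im(\mu))=0$. A secondary subtlety is that one should note $\Im(\mu)=0$ forces $\mu\in F$, but the identity and the argument go through uniformly whether or not $r,\lambda\in F$, so unlike Proposition~\ref{prop:3.3} there is no need to exclude $r\in F$ here.
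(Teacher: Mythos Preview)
Your proof is correct and takes essentially the same approach as the paper, which simply asserts that ``it is easy to verify that $\Re(r)=\Re(\lambda)$ and $|\Im(r)|=|\Im(\lambda)|$ if and only if $\tr(r)=\tr(\lambda)$ and $\norm(r)=\norm(\lambda)$'' without further detail. You have supplied precisely the verification the paper leaves to the reader, via the Pythagoras-type identity $\norm(\mu)=\Re(\mu)^2+\norm(\Im(\mu))$.
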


\begin{proof} It is easy to verify that
	$\Re(r)=\Re(\lambda)$ and $|\Im(r)|=|\Im(\lambda)|$ if and only if $\tr(r)=\tr(\lambda)$ and $\norm(r)=\norm(\lambda)$, and the latter means quadratic-equivalence.
\end{proof}

\begin{cor}
	Let $A$ be a locally-complex Cayley--Dickson algebra, and $\lambda\in{A\setminus F}$. Denote $a = \Re(\lambda)$ and $b=|\Im(\lambda)|$. Let $i\in{A}\setminus{F}$ be a certain element such that $i^2=-1$, so that $i$ generates a complex subfield $C$ of $A$. Then $r=a\pm bi$ are quadratically-equivalent to $\lambda$, and they are the two roots of 
	$p_{\lambda}(x)$ in~$C$.
\end{cor}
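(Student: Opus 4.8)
The plan is to verify directly that the three stated properties of $r = a \pm bi$ hold, using the characterization of quadratic-equivalence from \Pref{prop:quadratic-equivalence-LCCDA}. First I would compute $\Re(r)$ and $|\Im(r)|$ for $r = a \pm bi$. Since $a \in \mathbb{R} \subseteq A$ is scalar and $bi$ is a scalar multiple of $i$ with $i^2 = -1$, the element $i$ satisfies $\tr(i) = i + \bar{i} = 0$ (because $\norm(i) \cdot 1 = \bar{i} i = -i^2 = 1$ and $i^2 - \tr(i) i + \norm(i) = 0$ forces $\tr(i) i = i^2 + 1 = 0$, hence $\tr(i) = 0$ as $i \notin F$). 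Therefore $\Re(i) = 0$ and $\Im(i) = i$, so $\Re(r) = a = \Re(\lambda)$ and $\Im(r) = \pm bi$, giving $|\Im(r)| = |b| \cdot |i| = |b| = |\Im(\lambda)|$ since $b = |\Im(\lambda)| \geq 0$ and $\norm(i) = 1$. By \Pref{prop:quadratic-equivalence-LCCDA}, $r$ is quadratically-equivalent to $\lambda$.

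Next I would check that $r = a \pm bi$ are roots of $p_\lambda(x)$. By \Pref{prop:3.3} (applicable once we know $r \notin F$), quadratic-equivalence of $r$ and $\lambda$ already gives $p_\lambda(r) = 0$; alternatively one computes directly inside the commutative associative field $C = F[i]$ that $p_\lambda(x) = x^2 - 2a x + (a^2 + b^2) = (x - (a+bi))(x - (a - bi))$, so both $a \pm bi$ are roots. One small point to address is why $r \notin F$: this holds precisely because $b = |\Im(\lambda)| \neq 0$, which follows from $\lambda \notin F$ (if $b = 0$ then $\Im(\lambda) = 0$ since the norm is anisotropic on the locally-complex algebra, forcing $\lambda = a \in F$, contradiction). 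Thus $bi \neq 0$ and $a \pm bi \notin F$.

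Finally I would argue these are the \emph{only} roots of $p_\lambda(x)$ lying in $C$. Since $C \cong \mathbb{C}$ is a field, the quadratic $p_\lambda(x) \in F[x] \subseteq C[x]$ has at most two roots in $C$, and we have exhibited two distinct ones (distinct because $b \neq 0$), so these are all of them. The main obstacle — really the only thing requiring care — is the bookkeeping around $i$: confirming $\tr(i) = 0$ and $\norm(i) = 1$ from $i^2 = -1$ and $i \notin F$, and confirming $b \neq 0$ so that $r \notin F$ and \Pref{prop:3.3} applies; everything else is a routine verification in the two-dimensional subalgebra $C$. I would also note in passing that when $a$ and $b$ are both rational (or when $i \in A_1$ already), one recovers the familiar complex-conjugate pair, tying this back to the classical picture.
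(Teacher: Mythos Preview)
Your proof is correct and follows exactly the (implicit) approach of the paper, which states this corollary without proof as an immediate consequence of \Pref{prop:quadratic-equivalence-LCCDA}. One small expository point: your derivation of $\tr(i)=0$ and $\norm(i)=1$ is slightly circular as written (you use $\bar{i}i=-i^2$ before knowing $\bar{i}=-i$); the clean order is to observe that the characteristic equation $i^2-\tr(i)i+\norm(i)=0$ together with $i^2=-1$ gives $\tr(i)\,i=\norm(i)-1\in F$, whence $i\notin F$ forces $\tr(i)=0$ and then $\norm(i)=1$.
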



The following is a reformulation of the previous corollary.

\begin{cor}
	Let $A$ be a locally-complex Cayley--Dickson algebra, and $f\in{A[x]}$. If $\lambda\in{A\setminus F}$ is a spherical root of $f(x)$, then $f(x)$ has exactly two distinct roots quadratically-equivalent to $\lambda$ in every complex subfield of $A$. 
\end{cor}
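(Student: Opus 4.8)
The plan is to deduce this corollary directly from the preceding one by unwinding the definition of a spherical root. First I would recall that by definition $\lambda\in A\setminus F$ being a spherical root of $f(x)$ means that every $r\in A$ with $p_\lambda(r)=0$ is a root of $f(x)$; equivalently, by \Pref{prop:3.3}, the entire quadratic-equivalence class of $\lambda$ consists of roots of $f(x)$. Now fix an arbitrary complex subfield $C$ of $A$, i.e. $C=\mathbb{R}+\mathbb{R}i$ for some $i\in A\setminus F$ with $i^2=-1$ (such subfields exist since $A$ is locally-complex: any nonscalar element generates a copy of $\mathbb{C}$). Setting $a=\Re(\lambda)$ and $b=|\Im(\lambda)|$, the previous corollary tells us that the two elements $a\pm bi$ lie in $C$, are quadratically-equivalent to $\lambda$, and are exactly the roots of $p_\lambda(x)$ lying in $C$.

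Next I would argue the two halves. For the ``at least two roots'' direction: since $a\pm bi$ are quadratically-equivalent to $\lambda$, they belong to the quadratic-equivalence class of $\lambda$, hence are roots of $f(x)$ by the spherical-root hypothesis; and they are distinct because $b=|\Im(\lambda)|\neq 0$ (as $\lambda\notin F$, its imaginary part is a nonzero element and the norm on a locally-complex algebra is anisotropic/positive definite, so $|\Im(\lambda)|>0$). Thus $f(x)$ has at least two distinct roots in $C$ that are quadratically-equivalent to $\lambda$, namely $a+bi$ and $a-bi$. For the ``exactly two'' direction: suppose $r\in C$ is a root of $f(x)$ quadratically-equivalent to $\lambda$. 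Then by \Pref{prop:quadratic-equivalence-LCCDA} we have $\Re(r)=\Re(\lambda)=a$ and $|\Im(r)|=|\Im(\lambda)|=b$; writing $r=s+ti$ with $s,t\in\mathbb{R}$ inside $C$, we get $s=a$ and $|t\cdot i|=|t|\,|i|=|t|=b$, so $t=\pm b$ and hence $r\in\{a+bi,\,a-bi\}$. Therefore these are the only two quadratically-equivalent roots in $C$.

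I expect the only genuine subtlety to be bookkeeping about the complex subfield: one must be slightly careful that computing $\Re$ and $|\Im|$ of an element $r=s+ti\in C$ agrees with the intrinsic trace/norm of $r$ in $A$. This is immediate because $C$ is a two-dimensional subalgebra containing $F=\mathbb{R}$ and the involution of $A$ restricts to complex conjugation on $C$ (so $\bar i=-i$), whence $\tr(r)=r+\bar r=2s$ and $\norm(r)=\bar r r = s^2+t^2$, giving $\Re(r)=s$ and $|\Im(r)|=|t|$. Everything else is a direct translation of the previous corollary, so the proof is short.

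\begin{proof}
Fix a complex subfield $C=\mathbb{R}+\mathbb{R}i$ of $A$, where $i\in A\setminus F$ satisfies $i^2=-1$. Write $a=\Re(\lambda)$ and $b=|\Im(\lambda)|$; since $\lambda\notin F$ we have $\Im(\lambda)\neq 0$, and as the norm form on a locally-complex algebra is positive definite, $b>0$. By the previous corollary, $r_\pm=a\pm bi$ are quadratically-equivalent to $\lambda$ and are precisely the roots of $p_\lambda(x)$ in $C$; moreover $r_+\neq r_-$ because $b\neq 0$. As $\lambda$ is a spherical root of $f(x)$, every element quadratically-equivalent to $\lambda$ is a root of $f(x)$ (by \Pref{prop:3.3}), so $r_+$ and $r_-$ are two distinct roots of $f(x)$ in $C$, each quadratically-equivalent to $\lambda$.

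Conversely, let $r\in C$ be any root of $f(x)$ that is quadratically-equivalent to $\lambda$. Write $r=s+ti$ with $s,t\in\mathbb{R}$. Since the involution of $A$ restricts on $C$ to the map fixing $\mathbb{R}$ and sending $i\mapsto -i$, we have $\tr(r)=r+\bar r=2s$ and $\norm(r)=\bar r r=s^2+t^2$, so $\Re(r)=s$ and $|\Im(r)|=|t|$. By \Pref{prop:quadratic-equivalence-LCCDA}, quadratic-equivalence of $r$ and $\lambda$ gives $s=\Re(\lambda)=a$ and $|t|=|\Im(\lambda)|=b$, hence $t=\pm b$ and $r\in\{r_+,r_-\}$. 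Therefore $r_+$ and $r_-$ are the only roots of $f(x)$ in $C$ quadratically-equivalent to $\lambda$, and there are exactly two of them.
\end{proof}
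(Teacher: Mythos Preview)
Your proof is correct and follows exactly the approach the paper intends: the paper does not give a separate argument but simply declares the corollary to be ``a reformulation of the previous corollary,'' and you have faithfully spelled out that reformulation using the previous corollary together with \Pref{prop:3.3} and \Pref{prop:quadratic-equivalence-LCCDA}. Your additional care in checking that $b=|\Im(\lambda)|>0$ (so $r_+\neq r_-$) and that the involution on $C$ sends $i\mapsto -i$ is appropriate and correct; nothing is missing.
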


\begin{exmpl}
 If $A$ is a locally-complex Cayley--Dickson algebra of dimension 4 or 8, i.e., the real quaternion or octonion algebra, and there exist two distinct conjugate roots of a polynomial, then these roots must be spherical roots (see \cite[Theorem 3.4]{Chapman:2020a}). This is false in general for locally-complex Cayley--Dickson algebras. Returning to Example~\ref{exmpl:companion-fails},  we see that
  both $\bar{b} = -b$ and $a$ are quadratically equivalent to $b$, and for $f(x) = ax$ we have $f(b) = f(-b) = 0$ but $f(a) = a^2 = -2 \neq 0$.
\end{exmpl}

\begin{lem}\label{lem:if-decompose}
Let $f(x)\in{A[x]}$ and $\lambda\in{A\setminus F}$, where $A$ is an arbitrary Cayley--Dickson algebra. If 
$f(x) = g(x) p_{\lambda}(x)$ for some $g(x)\in A[x]$, then $\lambda$ is a spherical root of~$f(x)$.
\end{lem}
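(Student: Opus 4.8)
The plan is to verify the definition of a spherical root directly: I will show that \emph{every} $r \in A$ with $p_\lambda(r) = 0$ is a root of $f$. Two structural facts drive the argument. First, $p_\lambda(x) = x^2 - \tr(\lambda)x + \norm(\lambda)$ has coefficients in $F$, hence is central in $A[x]$ (as is the indeterminate $x$ itself). Second, $A$ is power-associative, so the subalgebra $F[r]$ generated by a single element $r$ is commutative and associative, and inside it all the usual power manipulations $r^i r^j = r^{i+j}$ are valid. In particular there is no need to invoke any substitution homomorphism: everything will reduce to algebra inside $F[r]$.

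First I would write $g(x) = \sum_i b_i x^i$ with $b_i \in A$. Since $x$, $\tr(\lambda)$, and $\norm(\lambda)$ are central, expanding $f(x) = g(x)p_\lambda(x)$ shows that the coefficient of $x^k$ in $f$ is $b_{k-2} - \tr(\lambda)b_{k-1} + \norm(\lambda)b_k$ (with $b_j = 0$ outside the range of $g$). Next, evaluating at an arbitrary $r \in A$ and regrouping the finite sum — legitimate by distributivity together with the scalar axioms, since $\tr(\lambda),\norm(\lambda)\in F$ — gives $f(r) = \sum_i b_i\bigl(r^{i+2} - \tr(\lambda)r^{i+1} + \norm(\lambda)r^i\bigr)$. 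Working inside $F[r]$ I then factor $r^i$ out of each summand: $r^{i+2} - \tr(\lambda)r^{i+1} + \norm(\lambda)r^i = r^i\bigl(r^2 - \tr(\lambda)r + \norm(\lambda)\bigr) = r^i p_\lambda(r)$. Hence $f(r) = \sum_i b_i\bigl(r^i p_\lambda(r)\bigr)$, which is $0$ as soon as $p_\lambda(r) = 0$. Since $A$ is quadratic we have $p_\lambda(\lambda) = 0$, so $\lambda$ itself is among these $r$; in particular $\lambda$ is a root of $f$, and therefore a spherical root.

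I do not expect a genuine obstacle here; the only point requiring care is the non-associativity of $A$, which is precisely why the scalar coefficients of $p_\lambda$ must be kept on the outside and why the identities $r^i r^2 = r^{i+2}$, $r^i r = r^{i+1}$ are invoked only within the single-generator subalgebra $F[r]$. One could instead argue via \Pref{prop:3.3}, observing that $\{\, r \in A \setminus F : p_\lambda(r) = 0 \,\}$ is exactly the quadratic-equivalence class of $\lambda$; but the direct computation above is shorter and handles the case $r = \lambda$ uniformly.
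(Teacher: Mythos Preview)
Your proof is correct and follows essentially the same route as the paper's own argument: both expand $f(x)=g(x)p_\lambda(x)$ explicitly, substitute an arbitrary $r$ with $p_\lambda(r)=0$, regroup the resulting sum by the coefficients of $g$, and use power-associativity inside $F[r]$ to factor out $r^i p_\lambda(r)$ from each term. Your additional remarks on why centrality of the coefficients of $p_\lambda$ and power-associativity are exactly what make the regrouping legitimate are apt and match the paper's implicit reasoning.
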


\begin{proof}
	Denote $T=\tr(\lambda)$ and $N=\norm(\lambda)$ (recall that the elements $T$ and $N$ are central in $A$, i.e., they belong to $F$). Now
suppose that $r$ is a root of $p_{\lambda}(x) = x^2-Tx+N$. 
We denote $g(x)=a_nx^n+\ldots+ a_1x + a_0$, where $a_k\in{A}$ for $1\le{k}\le{n}$. For convenience, we also set $a_{-2}=a_{-1}=a_{n+1}=a_{n+2}=0$. Expanding the expression for $f(x)$, we get:
\begin{align*}
	f(x) &= (a_nx^n+\ldots+ a_1x + a_0)(x^2-Tx+N) \\
	&= a_nx^{n+2}+(-a_nT+a_{n-1})x^{n+1}+(a_nN-a_{n-1}T+a_{n-2})x^n \\
	& + \ldots + (a_kN-a_{k-1}T+a_{k-2})x^k + \ldots + (a_2N-a_{1}T+a_{0})x^2 \\
	& + (a_1N-a_0T)x+a_0N \\
	&= \sum_{k=0}^{n+2} (a_kN-a_{k-1}T+a_{k-2})x^k.
\end{align*}
Now, we substitute $r$ in $f(x)$ and get
\begin{align*}
	f(r) &= \sum_{k=0}^{n+2} (a_kN-a_{k-1}T+a_{k-2})r^k \\
	&= \sum_{k=0}^{n} (a_kr^{k+2}-a_kTr^{k+1}+a_{k}Nr^k) \\
	&= \sum_{k=0}^{n} a_k(r^k(r^2-Tr+N)) = 0.
\end{align*}
Therefore, $\lambda$ is a spherical root of $f(x)$. 
\end{proof}

\begin{lem} \label{lem:if-spherical}
Let A be an arbitrary Cayley--Dickson algebra, and $f(x) \in A[x]$. If $\lambda \in A \setminus F$ is a spherical root of $f(x)$, then
$f(x) = g(x) p_{\lambda}(x)$ for some $g(x) \in A[x]$.
\end{lem}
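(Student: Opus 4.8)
The plan is to prove the converse of Lemma~\ref{lem:if-decompose} by performing polynomial division of $f(x)$ by the central quadratic $p_\lambda(x) = x^2 - Tx + N$ (with $T = \tr(\lambda)$, $N = \norm(\lambda)$), and then using the fact that $\lambda$ is a spherical root to force the remainder to vanish. Since $p_\lambda(x)$ is monic with coefficients in $F$ (hence central in $A[x]$), the division algorithm works over $A[x]$ exactly as over a commutative ring: there exist unique $g(x) \in A[x]$ and a remainder $bx + c$ with $b, c \in A$ such that
\begin{equation*}
  f(x) = g(x)\, p_\lambda(x) + bx + c.
\end{equation*}
The goal is therefore to show $b = c = 0$.

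The key step is to substitute into this identity two distinct roots of $p_\lambda(x)$ lying in $A$. Because $\lambda \in A \setminus F$ is a spherical root, every $r \in A$ with $p_\lambda(r) = 0$ is a root of $f(x)$; and by Lemma~\ref{lem:if-decompose} applied to $g(x)p_\lambda(x)$ (or just by the computation in its proof), such an $r$ also satisfies $g(r)\,p_\lambda(r) = 0$. The subtle point — and the one requiring care — is that substitution is not a ring homomorphism, so one cannot simply ``evaluate'' the displayed identity at $r$. However, the monomials $x^k$ are central and $A$ is power-associative, so substitution \emph{is} additive and respects multiplication by the central polynomial $p_\lambda(x)$: concretely, writing $g(x) = \sum a_k x^k$, one has $\big(g(x)p_\lambda(x)\big)\big|_{x = r} = \sum a_k\big(r^k(r^2 - Tr + N)\big) = \sum a_k\big(r^k p_\lambda(r)\big)$ by power-associativity, which is $0$ when $p_\lambda(r) = 0$. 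Hence substituting $r$ into the identity gives $0 = f(r) = 0 + br + c$, i.e. $br + c = 0$ for every root $r \in A$ of $p_\lambda(x)$.

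It remains to extract $b = c = 0$ from $br + c = 0$ holding at (at least) two distinct roots. Here I would invoke $\lambda$ itself: $\lambda$ is one such root (since $p_\lambda(\lambda) = 0$), and I need a second root $r' \neq \lambda$. The element $\bar\lambda = T - \lambda$ also satisfies $p_\lambda(\bar\lambda) = p_{\bar\lambda}(\bar\lambda) = 0$ (as $\tr(\bar\lambda) = \tr(\lambda)$, $\norm(\bar\lambda) = \norm(\lambda)$), and $\bar\lambda \neq \lambda$ precisely because $\lambda \notin F$ forces $\Im(\lambda) \neq 0$. Subtracting the two relations $b\lambda + c = 0$ and $b\bar\lambda + c = 0$ yields $b(\lambda - \bar\lambda) = 0$; since $\lambda - \bar\lambda = 2\,\Im(\lambda)$ is a nonscalar element, it lies in a two-dimensional subalgebra $F[\lambda]$ of $A$ on which the norm form restricts nontrivially, and in any Cayley--Dickson algebra a nonscalar $\mu$ satisfies $\mu\bar\mu = \norm(\mu) \in F$ with $\mu$ invertible in $A$ whenever $\norm(\mu) \neq 0$. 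If $\norm(\lambda-\bar\lambda)\neq 0$ this immediately gives $b = 0$ and then $c = 0$.

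The main obstacle is the degenerate case $\norm(\lambda - \bar\lambda) = 0$ with $\lambda - \bar\lambda \neq 0$, which can genuinely occur for higher-dimensional Cayley--Dickson algebras with isotropic norm form (cf.~Example~\ref{exmpl:roots-from-field}, where $\hat{\mathbb H}$ contains a nonzero element squaring to $0$). In that situation $b(\lambda-\bar\lambda)=0$ does not directly force $b=0$. To handle this I would instead argue that the full quadratic-equivalence class of $\lambda$ gives \emph{many} roots $r$ of $p_\lambda(x)$ — for instance, $\lambda + v$ for suitable $v$ in the trace-zero, norm-preserving directions, or conjugates $h\lambda h^{-1}$ when available — and that the affine function $r \mapsto br + c$ vanishing on this whole class forces $b = 0$: picking two roots $r_1, r_2$ in the class whose difference $r_1 - r_2$ \emph{is} invertible (which one can do by choosing representatives inside a split or nonsplit two-dimensional subalgebra as appropriate) closes the gap. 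I expect the cleanest route is to observe that the zero set of $p_\lambda$ in $A$, being closed under $r \mapsto T - r$ and, when the relevant subalgebra is split, an affine line's worth of nilpotent translates, always contains two elements with invertible difference unless $A$ itself is pathological in a way excluded by $\lambda \notin F$; nailing down exactly which pair to use is the one genuinely delicate point of the argument.
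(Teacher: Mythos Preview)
Your overall strategy---divide $f$ by the central monic quadratic $p_\lambda$, substitute roots of $p_\lambda$ to kill the linear remainder---is exactly the paper's. The problem is the cancellation step, which does not go through as written.

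First, the pair $\lambda,\bar\lambda$ is not always adequate: in characteristic~$2$ one has $\bar\lambda=\tr(\lambda)+\lambda$, so $\bar\lambda=\lambda$ whenever $\tr(\lambda)=0$, and such $\lambda\notin F$ certainly exist (e.g.\ $\ell_2$). So ``$\lambda\notin F$ forces $\bar\lambda\neq\lambda$'' is false over arbitrary~$F$.

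Second, and more seriously, even when $\mu:=\lambda-\bar\lambda\neq 0$ and $\norm(\mu)\neq 0$, the implication $b\mu=0\Rightarrow b=0$ is \emph{not} valid in a non-alternative Cayley--Dickson algebra. You are implicitly using $(b\mu)\bar\mu=b(\mu\bar\mu)=\norm(\mu)b$, but that move requires $\mu$ to be (right) alternative, and a generic element of $A_n$ for $n\ge 4$ is not. Concretely, Example~\ref{exmpl:companion-fails} exhibits $a=e_1+e_{10}$ and $b=e_7+e_{12}$ in $\mathbb{S}$ with $ab=0$ while $\norm(b)=2\neq 0$; so ``invertible'' in the sense $\norm\neq 0$ does not imply cancellable. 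Your fallback plan (``pick two roots whose difference is invertible'') inherits the same defect: invertible is not enough, you need the difference to be alternative.

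The paper resolves exactly this point. Instead of using $\bar\lambda$, it manufactures a second quadratically-equivalent element $\kappa$ so that $\kappa-\lambda$ is a nonzero $F$-scalar multiple of $\ell_{2k}-2\ell_{2k+1}$, which is alternative by Corollary~\ref{cor:alternative-basis} and has norm $-(4\mu+1)\nu_k\neq 0$ by construction of $A_1$. Then $(a(\kappa-\lambda))\overline{(\kappa-\lambda)}=a\,\norm(\kappa-\lambda)$ is legitimate and forces $a=0$. A short reduction (finding a quadratically-equivalent element with some odd basis coordinate nonzero) covers the remaining case. This choice of $\kappa$---tailored so the difference lands on a known alternative element of nonzero norm, rather than on $\lambda-\bar\lambda$---is the missing idea.
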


\begin{proof}
	As before, denote $T = \tr(\lambda)$ and $N = \norm(\lambda)$. We perform a division with remainder of $f(x)$ by the characteristic polynomial of $\lambda$, $p_{\lambda}(x) = x^2 - Tx + N$; i.e., there exist $g(x) \in A[x]$ and $a, b \in A$ such that
	$$
	f(x) = g(x)(x^2 - Tx + N) + ax + b.
	$$
	Denote $h(x) = g(x)(x^2 - Tx + N)$. 
We consider the decomposition $\lambda = \sum_{m=0}^{2^n-1} \lambda_m \ell_m$, where $L_n = \{ \ell_m \; | \; m = 0, \dots, 2^n - 1\}$ is the standard basis for $A = [\mu, \gamma_1,\ldots,\gamma_{n-1})_F$ and $\lambda_m \in F$ for all $m = 0, \dots, 2^n - 1$. Given $k = \sum_{l = 1}^{n-1} c_l 2^{l-1}$ with $c_l \in \{ 0, 1\}$ being the coefficients in the binary decomposition of $k$, we define $\nu_k = \prod_{l = 1}^{n-1} \gamma_l^{c_l} \in F$. Clearly, $\nu_k \neq 0$ for any $k \in \{ 0, \dots, 2^{n-1} - 1 \}$, and $\nu_0 = 1$. Then we have $\tr(\lambda) = 2 \lambda_0 + \lambda_1$ and
	$$
	\norm(\lambda) = \sum_{k = 0}^{2^{n-1}-1} \nu_k \norm(\lambda_{2k} + \lambda_{2k+1}\ell_1) = \sum_{k = 0}^{2^{n-1}-1} \nu_k (\lambda_{2k}^2 + \lambda_{2k} \lambda_{2k+1} - \mu \lambda_{2k+1}^2).
	$$
	
	Since $\lambda \notin F$, there exists some $m \neq 0$ such that $\lambda_m \neq 0$. Assume first that $m = 2k+1$ for some $k \geq 0$. Then we consider $\kappa = \lambda + \lambda_{2k+1} (\ell_{2k} - 2 \ell_{2k+1})$. Note that $(\lambda_{2k} + \lambda_{2k+1}) - \lambda_{2k+1} \ell_1 = \overline{\lambda_{2k} + \lambda_{2k+1} \ell_1}$ in $A_1$, so their traces and norms coincide, and thus $\tr(\lambda) = \tr(\kappa) = T$ and $\norm(\lambda) = \norm(\kappa) = N$. Since $\lambda$ is a spherical root of $f(x)$, we have $f(\lambda) = f(\kappa) = 0$. By Lemma~\ref{lem:if-decompose}, $\lambda$ and $\kappa$ are also roots of $h(x)$, so we obtain $a \lambda + b = a \kappa + b = 0$. It follows that $\lambda_{2k+1} a(\ell_{2k} - 2 \ell_{2k+1}) = a(\kappa - \lambda) = 0$. By Corollary~\ref{cor:alternative-basis}, $\ell_{2k} - 2 \ell_{2k+1}$ is alternative in~$A$, so
	\begin{align*}
	    0 &= \lambda_{2k+1} \big(a(\ell_{2k} - 2 \ell_{2k+1})\big) \overline{(\ell_{2k} - 2 \ell_{2k+1})}\\
	    &= \lambda_{2k+1} a\left((\ell_{2k} - 2 \ell_{2k+1}) \overline{(\ell_{2k} - 2 \ell_{2k+1})}\right)\\
	    &= \norm(\ell_{2k} - 2 \ell_{2k+1}) \lambda_{2k+1} a = -(4\mu + 1) \nu_k\lambda_{2k+1} a
	\end{align*}
	Then $(4\mu + 1) \nu_k \lambda_{2k+1} \neq 0$ implies that $a = 0$, and therefore, $b = 0$. Finally, $f(x) = g(x) (x^2 - Tx + N)$ as required.
	
	Assume now that $\lambda_m = 0$ for all odd values of $m$. Since $\lambda \notin F$, this is only possible for $n \geq 2$. It is sufficient to show that there exists $r = \sum_{m=0}^{2^n-1} r_m \ell_m \in A$ which is quadratically-equivalent to $\lambda$, and $r_m \neq 0$ for some odd value of $m$. 
	Let $k \geq 1$ be such that $\lambda_{2k} \neq 0$. Consider two cases:
	\begin{itemize}
	    \item If $\mu \neq 0$ then we set $r_m = \lambda_m$ for all $m \neq 2k+1$. We also set $r_{2k+1} = \mu^{-1} \lambda_{2k} \neq 0$. Then $\norm(r_{2k} + r_{2k+1} \ell_1) = r_{2k}^2 + r_{2k} r_{2k+1} - \mu r_{2k+1}^2 = \lambda_{2k}^2 = \lambda_{2k}^2 + \lambda_{2k} \lambda_{2k+1} - \mu \lambda_{2k+1}^2 = \norm(\lambda_{2k} + \lambda_{2k+1} \ell_1)$. Hence $\tr(r) = \tr(\lambda)$ and $\norm(r) = \norm(\lambda)$, as required.
	    \item If $\mu = 0$ then we choose $r_0 \in F$ such that $r_0^2 \neq \lambda_0^2$ and set $r_1 = 2(\lambda_0 - r_0)$. We also set $r_m = \lambda_m$ for all $m \notin \{ 0, 1, 2k+1 \}$. We next find $r_{2k+1}$ from the linear equation $(r_0^2 + r_0 r_1) + \nu_k \lambda_{2k} r_{2k+1} = \lambda_0^2$. Then at least one of the elements $r_1$ and $r_{2k+1}$ is nonzero.
	    
	    We now have $\tr(r) = 2r_0 + r_1 = 2 \lambda_0 = 2 \lambda_0 + \lambda_1 = \tr(\lambda)$ and $(r_0^2 + r_0 r_1) + \nu_k (r_{2k}^2 + r_{2k} r_{2k+1}) = \lambda_0^2 + \nu_k \lambda_{2k}^2 = (\lambda_0^2 + \lambda_0 \lambda_1) + \nu_k (\lambda_{2k}^2 + \lambda_{2k} \lambda_{2k+1})$, so $\norm(r) = \norm(\lambda)$. \qedhere
	\end{itemize}
\end{proof}

Combining Lemmas~\ref{lem:if-decompose} and~\ref{lem:if-spherical}, we obtain the following theorem.

\begin{thm} \label{thm:decompose-iff-spherical}
Let $A$ be an arbitrary Cayley--Dickson algebra, and $f(x)\in{A[x]}$. Then $\lambda\in A \setminus F$ is a spherical root of~$f(x)$ if and only if
$f(x) = g(x) p_{\lambda}(x)$ for some $g(x)\in A[x]$.
\end{thm}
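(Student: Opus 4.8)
The plan is to prove the biconditional by treating its two implications separately, since each has already been isolated as a lemma above; the theorem is then pure assembly.

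First I would handle the "if" direction: suppose $f(x) = g(x) p_{\lambda}(x)$ for some $g(x) \in A[x]$. Then $\lambda$ is a spherical root of $f(x)$, and this is exactly the content of Lemma~\ref{lem:if-decompose}. Recall that its proof writes $g(x) = \sum_k a_k x^k$, expands $f(x) = \sum_k (a_k N - a_{k-1} T + a_{k-2}) x^k$ with $T = \tr(\lambda)$, $N = \norm(\lambda) \in F$ central, substitutes an arbitrary root $r$ of $p_{\lambda}(x) = x^2 - Tx + N$, and regroups the resulting sum as $\sum_k a_k\big(r^k (r^2 - Tr + N)\big) = 0$, using only power-associativity of $A$. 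So no further argument is needed here.

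Next I would handle the "only if" direction: suppose $\lambda \in A \setminus F$ is a spherical root of $f(x)$; then $f(x) = g(x) p_{\lambda}(x)$ for some $g(x) \in A[x]$, which is exactly Lemma~\ref{lem:if-spherical}. That proof divides with remainder, $f(x) = g(x) p_{\lambda}(x) + ax + b$, and must force $a = b = 0$: since $\lambda$ and a suitably chosen quadratically-equivalent partner $\kappa$ are both roots of $f$ and of $g(x) p_{\lambda}(x)$, one gets $a\lambda + b = a\kappa + b = 0$, hence $a(\kappa - \lambda) = 0$; choosing $\kappa$ so that $\kappa - \lambda$ is one of the alternative elements of Corollary~\ref{cor:alternative-basis} and multiplying on the right by its conjugate, anisotropy of $\norm$ on the relevant coordinate yields $a = 0$, and then $b = 0$.

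Combining the two directions gives precisely the asserted equivalence: $\lambda \in A \setminus F$ is a spherical root of $f(x)$ iff $p_{\lambda}(x)$ divides $f(x)$ in $A[x]$ (a two-sided divisibility, since $p_{\lambda}$ has central coefficients). At the level of the theorem there is no real obstacle; the one genuine subtlety has already been absorbed into Lemma~\ref{lem:if-spherical}, namely the case where every odd-indexed coordinate of $\lambda$ in the standard basis $L_n$ vanishes, which forces one to first pass to a quadratically-equivalent element having a nonzero odd coordinate, splitting into the subcases $\mu \neq 0$ and $\mu = 0$. Thus the proof of the theorem itself amounts to citing Lemmas~\ref{lem:if-decompose} and~\ref{lem:if-spherical}.
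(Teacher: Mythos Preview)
Your proposal is correct and matches the paper's approach exactly: the theorem is stated immediately after Lemmas~\ref{lem:if-decompose} and~\ref{lem:if-spherical}, and its proof consists precisely of combining those two lemmas. Your additional recap of how each lemma works is accurate but unnecessary at the level of the theorem itself.
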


\begin{thm} \label{thm:spherical-root-of-companion}
Let $A$ be a Cayley--Dickson algebra over a field $F$
and $f(x) \in A[x]$. Assume that  $\lambda\in{A \setminus F}$ is a spherical root of $f(x)$. Then $\lambda$ is a root of~$C_f(x)$.
\end{thm}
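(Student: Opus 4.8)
The plan is to reduce everything to the factorization criterion \Tref{thm:decompose-iff-spherical}. Since $\lambda \in A \setminus F$ is a spherical root of $f(x)$, that theorem yields $f(x) = g(x)\, p_{\lambda}(x)$ for some $g(x) \in A[x]$, where $p_{\lambda}(x) = x^2 - \tr(\lambda) x + \norm(\lambda)$ has both nonleading coefficients in $F$. The crucial observation is that $p_{\lambda}(x) \in F[x] \subseteq F(x)$ is a \emph{scalar} of the Cayley--Dickson algebra $A \otimes_F F(x)$; hence it lies in the nucleus and the center of that algebra (scalars are always nuclear by $F(x)$-bilinearity of the multiplication), and it is fixed by the involution, which is $F(x)$-linear since it is $F$-linear and trivial on $x$.

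Next I would compute $C_f(x)$ using these properties. As the involution is an anti-automorphism fixing $p_{\lambda}(x)$, we get $\overline{f(x)} = \overline{g(x)\, p_{\lambda}(x)} = p_{\lambda}(x)\,\overline{g(x)}$, and therefore
\[
C_f(x) = \overline{f(x)}\, f(x) = \bigl(p_{\lambda}(x)\,\overline{g(x)}\bigr)\bigl(g(x)\, p_{\lambda}(x)\bigr) = p_{\lambda}(x)^2\,\overline{g(x)}\, g(x) = p_{\lambda}(x)^2\, C_g(x),
\]
where $p_{\lambda}(x)$ is freely moved across the two products precisely because it is central and nuclear in $A \otimes_F F(x)$.

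Finally I would substitute $\lambda$. The polynomials $p_{\lambda}(x)^2$, $C_g(x)$ and $C_f(x)$ all lie in $F[x]$, and since $A$ is power-associative the subalgebra $F[\lambda]$ is commutative and associative, so evaluation at $\lambda$ is a ring homomorphism $F[x] \to F[\lambda]$. Hence $C_f(\lambda) = p_{\lambda}(\lambda)^2\, C_g(\lambda)$, and since $p_{\lambda}(\lambda) = \lambda^2 - \tr(\lambda)\lambda + \norm(\lambda) = 0$ is exactly the quadratic relation satisfied by every element of a Cayley--Dickson algebra, we conclude $C_f(\lambda) = 0$.

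The one point that needs care — and the reason the statement is not simply immediate from $f(\lambda) = 0$ — is that substitution is \emph{not} multiplicative on $A[x]$ in general; this is exactly why \Lref{lem:companion-octonion} fails beyond octonions (see Example~\ref{exmpl:companion-fails}). So the argument genuinely relies on the factorization $f = g\cdot p_{\lambda}$ together with the fact that $p_{\lambda}(x)$ is a scalar over $F(x)$, rather than on any term-by-term manipulation; once those two ingredients are available, the remaining computation is routine.
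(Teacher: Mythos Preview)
Your proof is correct and follows essentially the same route as the paper: invoke \Tref{thm:decompose-iff-spherical} to factor $f(x)=g(x)p_{\lambda}(x)$, use that $p_{\lambda}(x)\in F[x]$ is central (and fixed by the involution) in $A\otimes_F F(x)$ to obtain $C_f(x)=C_g(x)\,p_{\lambda}(x)^2$, and conclude $C_f(\lambda)=0$. You spell out in slightly more detail than the paper why the final substitution step is legitimate (all polynomials lie in $F[x]$, so evaluation into the commutative associative subalgebra $F[\lambda]$ is a ring homomorphism), which is a welcome clarification.
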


\begin{proof}
Since $\lambda$ is a spherical root of $f(x)$, by Theorem~\ref{thm:decompose-iff-spherical} we have
$f(x)=g(x) p_{\lambda}(x)$ for some $g(x)\in A[x]$. Now, $C_f(x)=\overline{f(x)}\cdot f(x)=(p_{\lambda}(x) \overline{g(x)})\cdot (g(x)p_{\lambda}(x))$. Since the trace and the norm belong to $F$, the polynomial $p_{\lambda}(x)$ is central in $A[x]$. Hence $C_f(x)=\overline{g(x)}\cdot g(x) \cdot (p_{\lambda}(x))^2$.
Therefore, $\lambda$ is a root of $C_f(x)$.
\end{proof}

\begin{cor} \label{cor:number-of-spherical-roots}
Let A be a locally-complex Cayley--Dickson algebra, and $f(x) \in A[x]$.
If the degree of $f(x)$ is $n$, then it has at most $\lfloor \frac{n}{2} \rfloor$ 
different classes of spherical roots with respect to quadratic equivalence.
\end{cor}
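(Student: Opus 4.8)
The plan is to bound the number of spherical-root classes by showing that the corresponding characteristic polynomials all divide $f(x)$ \emph{simultaneously}. Concretely, if $\lambda_1, \dots, \lambda_k \in A \setminus F$ represent $k$ distinct quadratic-equivalence classes of spherical roots of $f$, I would prove that the product $q(x) = p_{\lambda_1}(x) \cdots p_{\lambda_k}(x) \in F[x]$ divides $f(x)$ in $A[x]$. Since $\deg q = 2k$ while $\deg f = n$, this forces $2k \leq n$, i.e. $k \leq \lfloor n/2 \rfloor$, which is exactly the claim.

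Two inputs feed into this. First, by Theorem~\ref{thm:decompose-iff-spherical}, each $p_{\lambda_i}(x)$ individually divides $f(x)$, say $f(x) = g_i(x)\, p_{\lambda_i}(x)$ with $g_i \in A[x]$. Second, distinct quadratic-equivalence classes are by definition distinct characteristic polynomials, so $p_{\lambda_1}, \dots, p_{\lambda_k}$ are pairwise distinct monic quadratics in $F[x] = \mathbb{R}[x]$, and moreover each of them is irreducible over $\mathbb{R}$: since $A$ is locally-complex and $\lambda_i \notin F$, the subalgebra $\mathbb{R}[\lambda_i] \cong \mathbb{C}$ is a field of dimension $2$, so the monic degree-$2$ polynomial $p_{\lambda_i}$ annihilating $\lambda_i$ is its minimal polynomial over $\mathbb{R}$ (a real root $c$ of $p_{\lambda_i}$ would give $(\lambda_i - c)(\lambda_i - c') = 0$ inside the field $\mathbb{R}[\lambda_i]$, forcing $\lambda_i \in F$, a contradiction). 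Distinct monic irreducible polynomials over a field are coprime, so the $p_{\lambda_i}$ are pairwise coprime in $\mathbb{R}[x]$.

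To assemble the divisors I would induct on $j$, showing that $q_j(x) := p_{\lambda_1}(x) \cdots p_{\lambda_j}(x)$ divides $f(x)$ in $A[x]$. The enabling observation is that $F[x]$ is central in $A[x] = A \otimes_F F[x]$, so division with remainder by any monic $d \in F[x]$ is available in $A[x]$ (performed coordinate-wise in an $F$-basis of $A$), and reduction modulo $d$ identifies $A[x]/(d)$ with $A \otimes_F \bigl(F[x]/(d)\bigr)$. Given $f = g\, q_j$ (induction hypothesis) and $f = g_{j+1}\, p_{\lambda_{j+1}}$, I pass to $R := A \otimes_F \bigl(F[x]/(p_{\lambda_{j+1}})\bigr)$; since $F[x]/(p_{\lambda_{j+1}}) \cong \mathbb{C}$ is a field and $\gcd(q_j, p_{\lambda_{j+1}}) = 1$, the image of $q_j$ in $R$ is a central unit. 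The image of $f$ in $R$ vanishes (from $f = g_{j+1}\, p_{\lambda_{j+1}}$), hence the image of $g\, q_j$ vanishes, and multiplying by the inverse of the image of $q_j$ shows the image of $g$ vanishes, i.e. $p_{\lambda_{j+1}}$ divides $g$ in $A[x]$. Writing $g = h\, p_{\lambda_{j+1}}$ gives $f = (h\, p_{\lambda_{j+1}})\, q_j = h\, q_{j+1}$, the last equality because $q_j$ and $p_{\lambda_{j+1}}$ are central. Taking $j = k$ yields $q_k \mid f$ and the bound.

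The step requiring the most care is manipulating divisibility in the non-associative ring $A[x]$: one must never commute or reassociate factors coming from $A$, and the argument works only because every $p_{\lambda_i}$ and every $q_j$ is \emph{central} in $A[x]$. Once that is respected, the reduction-modulo-$d$ identification and the B\'ezout/unit argument are routine, and local-complexity enters solely to ensure irreducibility — hence pairwise coprimality — of the characteristic polynomials, which is what makes the product (rather than merely the least common multiple) of the $p_{\lambda_i}$ divide $f$.
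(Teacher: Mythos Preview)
Your argument is correct, and it takes a genuinely different route from the paper's. The paper does not attempt to assemble the product $\prod_i p_{\lambda_i}$ as a divisor of $f$ inside $A[x]$. Instead it passes immediately to the companion polynomial: from $p_\lambda \mid f$ and the centrality of $p_\lambda$ one gets $p_\lambda \mid \overline{f}$, hence $p_\lambda^2 \mid C_f = \overline{f}\,f$; since $C_f \in \mathbb{R}[x]$, the coprimality argument for the distinct irreducible $p_{\lambda_i}$ is then carried out entirely in the commutative ring $\mathbb{R}[x]$ (or $\mathbb{C}[x]$), giving $4k \le \deg C_f = 2n$.

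What each approach buys: the paper's detour through $C_f$ moves all the divisibility bookkeeping into a commutative polynomial ring, so non-associativity of $A[x]$ never has to be confronted beyond the single identity $(\bar g\,p_\lambda)(g\,p_\lambda) = (\bar g\,g)\,p_\lambda^2$. Your approach is more direct---it never mentions $C_f$ and gives the bound $2k \le n$ straight from $f$---but the price is the careful reduction-mod-$p_{\lambda_{j+1}}$ step in the non-associative $A[x]$, which works precisely because $F[x]$ sits in the nucleus of $A[x] = A \otimes_F F[x]$ (so the quotient map $A[x] \to A \otimes_F \bigl(F[x]/(d)\bigr)$ really is an algebra homomorphism and central units can be cancelled). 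Both arguments exploit local-complexity in the same way, namely to guarantee that the $p_{\lambda_i}$ are irreducible over $\mathbb{R}$ and hence pairwise coprime.
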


\begin{proof}
By Theorem~\ref{thm:decompose-iff-spherical}, $p_\lambda(x)$ is a factor of $f(x)$. Since $p_\lambda(x)$ is central in $A[x]$, it is also a factor of $\overline{f(x)}$, and thus $(p_\lambda(x))^2$ is a factor of $C_f(x)$.
Since different indecomposable quadratic real polynomials are co-prime as polynomials in $\mathbb{C}[x]$, having more than $\lfloor \frac{n}{2} \rfloor$ spherical roots would imply that the degree of $C_f(x)$ be at least $4 \cdot (\lfloor \frac{n}{2} \rfloor+1)$. However, the degree of $C_f(x)$ is $2n$, contradiction.
\end{proof}

\begin{exmpl}
Corollary~\ref{cor:number-of-spherical-roots} does not hold for general Cayley--Dickson algebras. For example, let $A = \hat{\mathbb{H}}$ and $f(x) = (x-1)x(x+1)$. Then $f(x) = (x - 1) (x^2 + x) = x (x^2 - 1) = (x + 1) (x^2 - x)$, and there exist $\lambda_1, \lambda_2, \lambda_3 \in \hat{\mathbb{H}} \setminus \mathbb{R}$ such that $p_{\lambda_1}(x) = x^2 + x$, $p_{\lambda_2}(x) = x^2 - 1$, and $p_{\lambda_3}(x) = x^2 - x$, see Example~\ref{exmpl:quadratic-equivalence}. Hence there are three quadratic classes of spherical roots for $f(x)$.
\end{exmpl}

\begin{algo}
One can therefore find the spherical roots of a polynomial $f(x)\in A[x]$ over an arbitrary Cayley--Dickson algebra:
\begin{enumerate}
\item Find the roots of the companion polynomial $C_f(x)$ in $A$, which are grouped into a finite number of classes (since there is a finite number of monic quadratic polynomials in $F[x]$ which divide $C_f(x)$), where each class is characterized by its trace and norm.
\item For any such root $\lambda$, reduce $f(x)$ to a linear expression in $x$ by the rule $x^2=\tr(\lambda)x-\norm(\lambda)$ (in fact, this is equivalent to division with remainder of $f(x)$ by $p_{\lambda}(x)$).
\item If this reduction ends with 0, then $\lambda$ is a spherical root of $f(x)$. Otherwise, it is not.
\end{enumerate}
\end{algo}

In what follows, we would like to push the idea of Theorem~\ref{thm:decompose-iff-spherical} a bit further, and to prove that we can extend the decomposition of $f(x)$ given there, for the special case of Cayley--Dickson division algebras.
We start with preliminary lemmas.

\begin{lem}
	Let $A$ be an alternative algebra over a field $F$, and let $f(x) \in A[x]$, $g(x)\in F[x]$ and $\lambda\in{A}$. Denote $h(x)=f(x)g(x)$. Then $h(\lambda)=f(\lambda)g(\lambda)$. 
\end{lem}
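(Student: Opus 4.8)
The statement to prove is: for $A$ an alternative algebra over $F$, $f(x) \in A[x]$, $g(x) \in F[x]$, $\lambda \in A$, and $h(x) = f(x)g(x)$, one has $h(\lambda) = f(\lambda)g(\lambda)$. The plan is to reduce everything to the single-generated subalgebra $F[\lambda] \subseteq A$ and exploit that this subalgebra is associative (and commutative) by power-associativity, which holds for any alternative algebra. First I would write $f(x) = \sum_i a_i x^i$ with $a_i \in A$ and $g(x) = \sum_j b_j x^j$ with $b_j \in F$; then $h(x) = \sum_k c_k x^k$ where $c_k = \sum_{i+j=k} a_i b_j = \sum_{i+j=k} b_j a_i$ (the $b_j$ being central scalars). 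Hence $h(\lambda) = \sum_k c_k \lambda^k = \sum_k \sum_{i+j=k} b_j a_i \lambda^k$.

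Next I would compare this with $f(\lambda)g(\lambda) = \bigl(\sum_i a_i \lambda^i\bigr)\bigl(\sum_j b_j \lambda^j\bigr)$. Since each $b_j \in F$ is central and commutes and associates with everything, this equals $\sum_{i,j} b_j (a_i \lambda^i)\lambda^j$. The crux is then the identity $(a_i \lambda^i)\lambda^j = a_i(\lambda^i \lambda^j) = a_i \lambda^{i+j}$. This is where power-associativity enters: the powers $\lambda^i, \lambda^j$ lie in the associative subalgebra $F[\lambda]$, so $\lambda^i\lambda^j = \lambda^{i+j}$ there; and $a_i(\lambda^i \lambda^j) = (a_i \lambda^i)\lambda^j$ requires an associativity statement involving $a_i \notin F[\lambda]$ together with two elements of $F[\lambda]$. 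This follows from the standard fact that in an alternative algebra any subalgebra generated by two elements is associative (Artin's theorem): take the subalgebra generated by $a_i$ and $\lambda$; it contains $a_i$, $\lambda^i$, $\lambda^j$ and is associative, so the bracketing is immaterial. Collecting terms by $k = i+j$ then gives $f(\lambda)g(\lambda) = \sum_k \sum_{i+j=k} b_j a_i \lambda^k = h(\lambda)$.

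The main obstacle — really the only nontrivial point — is justifying $(a_i \lambda^i)\lambda^j = a_i \lambda^{i+j}$, i.e. that one may freely reassociate a product of $a_i$ with two powers of $\lambda$. I expect the cleanest route is to invoke Artin's theorem (every two-generated subalgebra of an alternative algebra is associative), which is a foundational result that the paper may cite or treat as standard for alternative algebras; alternatively, since $\lambda^i$ and $\lambda^j$ are both powers of the single element $\lambda$, one can get away with the weaker pointwise identities $a(\lambda^i \lambda^j) = a \lambda^{i+j}$ following from $\lambda^i\lambda^j=\lambda^{i+j}$ in $F[\lambda]$ combined with repeated use of the alternative laws $a(\lambda \lambda)=(a\lambda)\lambda$ and their Moufang-type consequences. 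Everything else is bookkeeping: linearity of substitution (valid because $A$ is power-associative, as already noted in the paper), centrality of the $b_j$, and reindexing the double sum by $k=i+j$.
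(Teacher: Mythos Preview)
Your proposal is correct and follows essentially the same approach as the paper: expand both polynomials, use centrality of the $b_j \in F$, and reduce to the identity $(a_i\lambda^i)\lambda^j = a_i\lambda^{i+j}$, which the paper (like you) justifies via Artin's theorem by noting that $F[\lambda,a_i]$ is an associative subalgebra of $A$. The paper's remark preceding its proof makes exactly this point, and the computation is the same up to reindexing.
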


	Notice that under the assumptions of the lemma, $g(\lambda)$ belongs to $F[\lambda]$, and therefore belongs to the associative subalgebra $F[\lambda,c]$ of $A$ for any $c \in A$; thus the proof proceeds almost the same as for associative algebras \cite[Lemma 4.3]{ChapmanVishkautsan:2021}.

\begin{proof}
	Write $f(x)=\sum_{i=0}^m c_i x^i$ and $g(x)=\sum_{j=0}^k d_j x^j$.
	Then $h(x)=f(x)\cdot g(x)=\sum_{i=0}^m (c_i x^i)\cdot g(x)=\sum_{i=0}^m \sum_{j=0}^k (c_i d_j) x^{i+j}$.
	Hence $h(\lambda)=\sum_{i=0}^m \sum_{j=0}^k (c_i d_j) \lambda^{i+j}.$ Since the coefficients $d_i$, $0\le{i}\le{k}$, are in $F$, we get $h(\lambda)=\sum_{j=0}^k d_j \sum_{i=0}^m  (c_i \lambda^{i+j})= \sum_{j=0}^k d_j ((\sum_{i=0}^m  c_i \lambda^{i})\lambda^j)=\sum_{j=0}^k d_j (f(\lambda)\lambda^j)=f(\lambda)\sum_{j=0}^k d_j \lambda^j=f(\lambda)g(\lambda)$.
\end{proof}

\begin{cor} \label{cor:root-of-product}
	Let $A$ be a division algebra over a field $F$, and let $f(x) \in A[x]$, $g(x)\in F[x]$ and $\lambda\in{A}$. Denote $h(x)=f(x)g(x)$. If $h(\lambda)=0$ and $g(\lambda)\ne 0$, then $f(\lambda)=0$.
\end{cor}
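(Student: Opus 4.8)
The plan is to deduce \Cref{cor:root-of-product} directly from the preceding lemma. By that lemma, since $g(x) \in F[x]$, we have the factorization of substitutions $h(\lambda) = f(\lambda) g(\lambda)$ in $A$. The hypothesis gives $h(\lambda) = 0$, hence $f(\lambda) g(\lambda) = 0$ in $A$. Now invoke the assumption that $A$ is a division algebra: a product of two elements of $A$ is zero only if one of the factors is zero. Since $g(\lambda) \neq 0$ by hypothesis, we conclude $f(\lambda) = 0$.

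The only subtlety worth remarking on is associativity of the cancellation step: in a general (non-associative) division algebra, ``division algebra'' is taken to mean that left and right multiplication by any nonzero element are bijective, which is exactly what is needed to pass from $f(\lambda) g(\lambda) = 0$ with $g(\lambda) \neq 0$ to $f(\lambda) = 0$ (right multiplication by $g(\lambda)$ is injective, so its kernel is trivial). No further structure is required. I do not anticipate any genuine obstacle here; the content of the corollary is entirely carried by the preceding lemma, and this is a one-line formal consequence. The proof will therefore read essentially: apply the lemma to obtain $h(\lambda) = f(\lambda) g(\lambda)$, note that $h(\lambda) = 0$ and $g(\lambda) \neq 0$, and use that $A$ has no zero divisors to conclude $f(\lambda) = 0$.
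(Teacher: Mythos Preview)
Your argument is correct and matches the paper's intended approach exactly: the paper states this result as a corollary of the preceding lemma with no separate proof, and the derivation is precisely the one you give. One small remark worth making explicit: the preceding lemma assumes $A$ is \emph{alternative} while the corollary as stated only says ``division algebra'', so strictly speaking both hypotheses are needed to invoke the lemma---this is harmless in the paper since the corollary is applied only to quaternion and octonion division algebras, which are alternative.
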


\begin{thm}
	Let $A$ be an octonion or quaternion division algebra over an arbitrary field F,  and $f(x) \in A[x]$. Then $f(x)$ decomposes 
	into a product 
	\[f(x)=g(x)\prod_{i=1}^k p_{\lambda_i}(x),\]
	where $g(x)$ has only non-spherical roots, which are exactly the non-spherical roots of $f(x)$, and $\lambda_1,\ldots,\lambda_k$ are representatives from all the quadratic-equivalence classes of spherical roots of $f$, counting multiplicity.
\end{thm}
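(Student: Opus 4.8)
The plan is to extract the spherical factors one at a time and then show that what remains has no spherical roots. First I would prove the following single-step claim: if $\lambda \in A \setminus F$ is a spherical root of $f(x)$, then by Theorem~\ref{thm:decompose-iff-spherical} we may write $f(x) = g(x) p_{\lambda}(x)$ for some $g(x) \in A[x]$, and I claim that the spherical roots of $g(x)$ are exactly the spherical roots of $f(x)$ other than (the quadratic-equivalence class of) $\lambda$, counted with multiplicity reduced by one for the class of $\lambda$. The key point here is that $p_{\lambda}(x) \in F[x]$ is central in $A[x]$, so for any $\mu \in A \setminus F$ with $\mu$ not quadratically-equivalent to $\lambda$, the polynomial $p_{\mu}(x)$ divides $f(x)$ iff it divides $g(x)$: indeed, $p_\mu$ and $p_\lambda$ are coprime in $F[x]$ (being distinct monic irreducible quadratics over $F$ — irreducible because $A$ is a division algebra, so they have roots in $A \setminus F$ but none in $F$ by Proposition~\ref{prop:roots-from-field}), so if $p_{\mu}(x) \mid g(x) p_{\lambda}(x)$ then $p_{\mu}(x) \mid g(x)$. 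For the class of $\lambda$ itself one uses unique factorization in $F[x]$ to track the multiplicity.

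Next I would set up the induction. By Corollary~\ref{cor:number-of-spherical-roots} (applicable since a division Cayley--Dickson algebra is locally-complex when real, but more directly because $(p_{\lambda}(x))^2 \mid C_f(x)$ and $\deg C_f = 2\deg f$), the number of quadratic-equivalence classes of spherical roots of $f$, counted with multiplicity, is finite — at most $\lfloor \deg f / 2\rfloor$. So I would induct on this number $k$. If $k = 0$, take $g = f$ and there is nothing to prove. If $k \geq 1$, pick a spherical root $\lambda_1$ of $f$, factor $f(x) = f_1(x) p_{\lambda_1}(x)$ via Theorem~\ref{thm:decompose-iff-spherical}, observe by the claim above that $f_1$ has one fewer spherical class (with multiplicity), and apply the induction hypothesis to $f_1$ to get $f_1(x) = g(x)\prod_{i=2}^{k} p_{\lambda_i}(x)$ with $g$ having no spherical roots; reassembling gives the desired factorization.

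The remaining obligation is to check that the non-spherical roots of $g$ coincide with those of $f$, and this is where Corollary~\ref{cor:root-of-product} does the work. If $r \in A$ is a non-spherical root of $f$, then $f(r) = g(r)\,p_{\lambda_1}(r)\cdots$; since each $p_{\lambda_i} \in F[x]$, repeated application of the lemma preceding Corollary~\ref{cor:root-of-product} gives $f(r) = g(r)\prod_{i} p_{\lambda_i}(r)$ (all factors in the associative subalgebra $F[r]$, so this product genuinely factors), and since $r$ is not quadratically-equivalent to any $\lambda_i$ we have $p_{\lambda_i}(r) \neq 0$ by Proposition~\ref{prop:3.3}; as $A$ is a division algebra, $g(r) = 0$. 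Conversely any root of $g$ is a root of $f$ by the same identity, and it is non-spherical in $f$ because $g$ has no spherical roots and the $p_{\lambda_i}$ contribute only the classes $[\lambda_i]$, which are the spherical classes already accounted for. I expect the main obstacle to be bookkeeping the multiplicities cleanly — making precise that ``counting multiplicity'' on the algebra side (how many times we can strip off $p_{\lambda}$) matches the exponent of $p_{\lambda}(x)$ in the $F[x]$-factorization of a suitable central polynomial — but this is handled by the centrality of $p_\lambda$ together with unique factorization in $F[x]$, exactly as in the one-step claim.
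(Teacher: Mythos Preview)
Your proposal is correct and follows essentially the same approach as the paper: the paper's proof is the single sentence ``by induction on $k$, using Theorem~\ref{thm:decompose-iff-spherical} and Corollary~\ref{cor:root-of-product},'' and you have supplied precisely the details that sentence suppresses --- peeling off one central factor $p_{\lambda}(x)$ at a time via Theorem~\ref{thm:decompose-iff-spherical}, using coprimality in $F[x]$ to track the remaining spherical classes, and invoking Corollary~\ref{cor:root-of-product} (with the division hypothesis) to match non-spherical roots. The only minor slip is the citation of Proposition~\ref{prop:roots-from-field} for irreducibility of $p_{\mu}$; the correct reason is simply that $\mu \in A \setminus F$ satisfies $p_{\mu}$ and $A$ is a division algebra, so $p_{\mu}$ cannot split over $F$.
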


\begin{proof}
The proof is by induction on $k$, the number of spherical roots of $f$, using Theorem~\ref{thm:decompose-iff-spherical} and Corollary~\ref{cor:root-of-product}.
\end{proof}

\begin{cor}
	Let A be an octonion or quaternion division algebra over an arbitrary field F, and $f(x) \in A[x]$.
	Suppose $f$ is of even degree $2n$, and that $f(x)$ has $n$ spherical roots from $n$ distinct quadratic-equivalence classes. Then $f(x)=cg(x)$ for some $c\in A$ and $g(x)\in F[x]$. 
\end{cor}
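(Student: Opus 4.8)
The plan is to apply the factorization theorem proved immediately above. It provides a decomposition
\[
f(x) = h(x) \prod_{i=1}^{k} p_{\lambda_i}(x),
\]
in which $h(x) \in A[x]$ retains exactly the non-spherical roots of $f$, and $\lambda_1, \dots, \lambda_k$ are representatives of the quadratic-equivalence classes of spherical roots of $f$, listed with multiplicity. By hypothesis there are $n$ distinct such classes, so $k \geq n$. Each factor $p_{\lambda_i}(x) = x^2 - \tr(\lambda_i) x + \norm(\lambda_i)$ has degree $2$, so comparing degrees gives $2n = \deg f = \deg h + 2k \geq \deg h + 2n$; hence $\deg h = 0$ and $k = n$, i.e., each of the $n$ classes contributes exactly one factor. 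Therefore $h(x)$ is a constant $c \in A$, and $c \neq 0$ since $\deg f = 2n$. Since every $p_{\lambda_i}(x)$ lies in $F[x]$, the product $g(x) := \prod_{i=1}^{n} p_{\lambda_i}(x)$ lies in $F[x]$, and $f(x) = c\, g(x)$, as required.

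There is no substantive obstacle here: the real work was already done in the preceding theorem (which rests on Theorem~\ref{thm:decompose-iff-spherical} and Corollary~\ref{cor:root-of-product}), and what remains is the degree count. The one point to keep straight is that the phrase ``counting multiplicity'' in that theorem means a class whose characteristic polynomial divides $f$ to a higher power would be listed repeatedly; this is precisely what upgrades the inequality $k \geq n$ to the equality $k = n$ once the degree of $f$ is pinned at $2n$, and simultaneously forces the cofactor $h$ to be a nonzero scalar.
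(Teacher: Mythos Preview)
Your argument is correct and is precisely the intended one: the paper states this corollary without proof, leaving it as an immediate consequence of the preceding factorization theorem via the degree count you carry out. There is nothing to add.
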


\section{Derivatives and critical points} \label{sec:gauss--lucas}

Let $A$ be an arbitrary algebra over a field $F$. The formal derivative $f'(x)$ of a polynomial $f(x)=a_n x^n+\dots+a_1 x+a_0 \in A[x]$ is defined as $f'(x)=n a_n x^{n-1}+\dots+a_1$. 

\begin{lem}[Leibniz rule]
For $f(x),g(x)\in{A[x]}$, if $h(x)=f(x)g(x)$ then $h'(x)=f'(x)g(x)+f(x)g'(x)$.
\end{lem}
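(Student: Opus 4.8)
The plan is to verify the identity directly from the definitions, using only the facts that $x$ is central in $A[x]=A\otimes_F F[x]$ and that multiplication in $A[x]$ is $F$-bilinear; no associativity, commutativity, or alternativity of $A$ is required. First I would write $f(x)=\sum_{i=0}^{m} a_i x^i$ and $g(x)=\sum_{j=0}^{n} b_j x^j$ with $a_i,b_j\in A$, and record that, since $x$ is central, $(a_i x^i)(b_j x^j)=(a_i b_j)x^{i+j}$. Hence by distributivity
\[
h(x)=f(x)g(x)=\sum_{i=0}^{m}\sum_{j=0}^{n}(a_i b_j)x^{i+j}.
\]

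Next I would apply the termwise definition of the formal derivative to this expression and split the scalar factor $i+j$:
\[
h'(x)=\sum_{i=0}^{m}\sum_{j=0}^{n}(i+j)(a_i b_j)x^{i+j-1}
     =\sum_{i,j} i\,(a_i b_j)x^{i+j-1}+\sum_{i,j} j\,(a_i b_j)x^{i+j-1},
\]
where I use that the integer scalars $i,j$ (i.e.\ the images of integers in $F$) are central. Using centrality of $x$ once more to regroup each summand of the first double sum as $(i\,a_i x^{i-1})(b_j x^j)$ and each summand of the second as $(a_i x^i)(j\,b_j x^{j-1})$, distributivity then gives that the first double sum equals $\bigl(\sum_i i\,a_i x^{i-1}\bigr)\bigl(\sum_j b_j x^j\bigr)=f'(x)g(x)$ and the second equals $\bigl(\sum_i a_i x^i\bigr)\bigl(\sum_j j\,b_j x^{j-1}\bigr)=f(x)g'(x)$, which is the asserted identity.

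There is essentially no obstacle here: the computation is purely formal, and the only point requiring a moment's care is that one must never attempt to commute or reassociate the coefficients $a_i,b_j\in A$ — and indeed the argument never does, since every manipulation moves only powers of the central indeterminate $x$ and central integer scalars. A marginally cleaner alternative would be to prove the rule first for monomials $f(x)=a x^i$, $g(x)=b x^j$ (where it reads $(i+j)(ab)x^{i+j-1}=(i\,a x^{i-1})(b x^j)+(a x^i)(j\,b x^{j-1})$) and then extend to general $f,g$ using that both sides of the claimed identity are $F$-bilinear in the pair $(f,g)$; I would use whichever phrasing is shorter in context.
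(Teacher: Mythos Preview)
Your proof is correct and is precisely the ``standard proof'' the paper alludes to; the paper's own proof is merely the remark that the formal derivative is linear and that the usual argument for polynomial rings never uses associativity of the coefficient ring. You have simply written out explicitly what the paper leaves implicit.
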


\begin{proof}
The proof of this lemma is straightforward,  
since the formal derivative is linear, and standard proofs of the Leibniz rule for polynomial rings do not rely on the associativity of the ring of coefficients. 
\end{proof}

An element $\lambda \in A$ is called a critical point of $f(x)\in{A[x]}$ if $f'(\lambda)=0$. A spherical critical point is a critical point which is also a spherical root of $f'(x)$. 

\begin{thm} \label{thm:gauss--lucas}
Let $f(x)\in{A[x]}$ and $\lambda\in{A}$, where $A$ is a locally-complex Cayley--Dickson algebra.
The spherical critical points of $f(x)$ are contained in the convex hull of the roots of $C_f(x)$.
\end{thm}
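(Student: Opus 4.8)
The plan is to descend to the two‑dimensional subalgebra of $A$ generated by the critical point — a copy of $\mathbb{C}$ — and there invoke the classical complex Gauss--Lucas theorem for the real polynomial $C_f(x)$.

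Let $\mu$ be a spherical critical point of $f(x)$, so $\mu\in A\setminus F$ and $\mu$ is a spherical root of $f'(x)$. Since $\mu\notin F$ we have $\Im(\mu)\ne 0$, hence $u:=\Im(\mu)/|\Im(\mu)|$ satisfies $\tr(u)=0$ and $\norm(u)=1$, so, $A$ being quadratic, $u^2=-1$. Thus $K:=F+Fu$ is a subalgebra of $A$ isomorphic to $\mathbb{C}$, and $\mu=\Re(\mu)+|\Im(\mu)|\,u\in K$. Because $C_f(x)\in F[x]$ and $K$ is a subalgebra, the evaluation of $C_f$ at an element of $K$ may be carried out inside $K$, so every root of $C_f(x)$ lying in $K$ is also a root of $C_f(x)$ in $A$. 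Hence it suffices to show that $\mu$ lies in the convex hull, taken inside $K\cong\mathbb{C}$, of the roots of $C_f(x)$ in $K$.

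The substantive step is to show $C_f'(\mu)=0$, i.e.\ that $\mu$ is a critical point of $C_f(x)$ viewed over $K$. By \Tref{thm:decompose-iff-spherical} applied to $f'(x)$ we have $f'(x)=h(x)p_\mu(x)$ for some $h(x)\in A[x]$. The Leibniz rule together with $(\overline{f(x)})'=\overline{f'(x)}$ gives $C_f'(x)=\overline{f'(x)}\,f(x)+\overline{f(x)}\,f'(x)$. Since $p_\mu(x)\in F[x]$ is fixed by the involution, $\overline{f'(x)}=p_\mu(x)\,\overline{h(x)}$; and since $\tr(\mu),\norm(\mu)\in F$, the polynomial $p_\mu(x)$ is central in $A[x]$, so both summands are divisible by $p_\mu(x)$ and $C_f'(x)=p_\mu(x)\big(\overline{h(x)}\,f(x)+\overline{f(x)}\,h(x)\big)$. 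As $C_f'(x),p_\mu(x)\in F[x]$ with $p_\mu(x)$ monic, uniqueness of division by a monic polynomial forces the cofactor to lie in $F[x]$, so $p_\mu(x)\mid C_f'(x)$ in $F[x]\subseteq K[x]$. Since $\mu$ is a root of $p_\mu(x)$, we get $C_f'(\mu)=0$. The classical Gauss--Lucas theorem, transported along $K\cong\mathbb{C}$, now places $\mu$ in the convex hull of the roots of $C_f(x)$ in $K$, which by the previous paragraph lies inside the convex hull of the roots of $C_f(x)$ in $A$, completing the argument.

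The point I expect to require the most care is the passage through $A[x]$: formal differentiation, conjugation, and polynomial division must be reconciled with the possibly non‑associative and non‑alternative multiplication of $A$, which is harmless here only because $p_\mu(x)$ has scalar coefficients and is therefore central in $A[x]$ (so that the rearrangements above are legitimate and division by it is unique). One must also observe that the single subfield $K=F+Fu$ already contains, under the isomorphism $K\cong\mathbb{C}$, copies of all the roots of the real polynomial $C_f(x)$, so that the one‑variable complex Gauss--Lucas theorem genuinely suffices.
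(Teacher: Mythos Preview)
Your argument is correct and follows essentially the same route as the paper's proof: use \Tref{thm:decompose-iff-spherical} to factor $f'(x)=h(x)p_\mu(x)$, apply the Leibniz rule and the centrality of $p_\mu(x)$ to conclude $p_\mu(x)\mid C_f'(x)$, and then invoke the classical Gauss--Lucas theorem inside the complex subfield generated by $\mu$. The only cosmetic difference is that you take the extra (harmless) step of noting the cofactor lies in $F[x]$ by uniqueness of division, whereas the paper simply reads off $C_f'(\mu)=0$ from the factorization.
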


\begin{proof}
Since $\lambda$ is a spherical critical point of $f(x)$, we have
$f'(x)=g(x)p_{\lambda}(x)$ by Theorem~\ref{thm:decompose-iff-spherical}.
Note that $\left(\overline{f(x)}\right)' = \overline{f'(x)}$, hence
\begin{align*}
 C_f'(x)&=\overline{f'(x)}f(x)+\overline{f(x)}f'(x)\\
 &=\overline{g(x)p_{\lambda}(x)} \cdot f(x) + \overline{f(x)}g(x)p_{\lambda}(x) \\
 &=(\overline{g(x)}f(x)+\overline{f(x)}g(x))p_{\lambda}(x).
\end{align*}

\noindent Therefore, $\lambda$ is a root of $C_f'(x)$. Besides, $\lambda$ belongs to the isomorphic copy $C$ of the field of complex numbers over $\mathbb{R}$ containing $\lambda$. Thus, by the Gauss--Lucas theorem it lies in the convex hull (inside this field of complex numbers) of the roots of $C_f(x)$ from that field. It is clear that this convex hull is contained in the convex hull defined by all the roots in $A$ of $C_f(x)$.
\end{proof}

\begin{thm} \label{thm:quadratic-polynomial}
Let $\mathbb{O}$ be the real octonion algebra and $f(x) \in \mathbb{O}[x]$ be a quadratic polynomial. Then the roots of $f'(x)$ lie in the convex hull of the roots of $C_f(x)$.
\end{thm}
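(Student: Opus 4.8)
The plan is to peel off two easy reductions, push the problem into a complex subfield where classical facts apply, and then clinch it with one algebraic identity for the companion polynomial of a monic octonion quadratic. First I would reduce to a monic polynomial: write $f(x)=a_2x^2+a_1x+a_0$ with $a_2\neq 0$, so that, $\mathbb{O}$ being a division algebra, $f'(x)=2a_2x+a_1$ has the single root $\lambda=-\tfrac12 a_2^{-1}a_1$. Replacing $f$ by $\overline{a_2}f(x)=\norm(a_2)\bigl(x^2+bx+c\bigr)$ with $b=a_2^{-1}a_1$, $c=a_2^{-1}a_0$, and using that the norm form is multiplicative on the octonion algebra $\mathbb{O}\otimes_{\mathbb{R}}\mathbb{R}(x)$, one gets $C_f(x)=\norm(a_2)\,C_g(x)$ for $g(x)=x^2+bx+c$, so $C_f$ and $C_g$ have the same roots, while $g'$ has the same root $\lambda=-b/2$; hence we may assume $f(x)=x^2+bx+c$ and $\lambda=-b/2$. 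Next I would pass to the subfield $C=\mathbb{R}[\lambda]$: if $\lambda\in\mathbb{R}$ the four roots of $C_f$ have sum $-\tr(b)=4\lambda$, so $\lambda$ is their centroid and lies in their convex hull in any complex subfield of $\mathbb{O}$; so assume $\lambda\notin\mathbb{R}$, whence $C\isom\mathbb{C}$. As $C$ is algebraically closed, all four roots of $C_f$ lie in $C$, and they are a fortiori among the roots of $C_f$ in $\mathbb{O}$; thus it suffices to show that $\lambda$ lies in the convex hull, computed inside $C$, of these four roots.

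The key structural step is to rewrite $C_f$. Decompose $\mathbb{O}=C\oplus C^{\perp}$ orthogonally for the norm form and write $c=c_0+c_1$ with $c_0\in C$, $c_1\in C^{\perp}$. From the Cayley--Dickson construction of $\mathbb{O}$ as an iterated doubling starting at $C$ one has the twisting relation $zc_1=c_1\overline{z}$ for all $z\in C$, together with $c_1^2=-\norm(c_1)$. Expanding $C_f(x)=\overline{f(x)}\,f(x)=\bigl(\overline{P}(x)-c_1\bigr)\bigl(P(x)+c_1\bigr)$ with $P(x)=x^2+bx+c_0\in C[x]$, the mixed terms cancel (since $\overline{P}(x)c_1=c_1P(x)$ by the twisting relation applied coefficientwise), giving
\[
C_f(x)=\overline{P}(x)\,P(x)+n_1,\qquad n_1=\norm(c_1)\ge 0 .
\]
Factoring $P(x)=(x-r_1)(x-r_2)$ over $C$ we get $r_1+r_2=-b=2\lambda$, so $\lambda$ is the midpoint of $r_1,r_2$, and $\overline{P}(x)P(x)=p_{r_1}(x)p_{r_2}(x)$, so $C_f(x)=p_{r_1}(x)p_{r_2}(x)+n_1$. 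This is the point where the non-multiplicativity of substitution over $\mathbb{O}$ is circumvented: everything now lives in the commutative-associative $C[x]$ up to the single real correction $n_1$. If $n_1=0$ we are done, since then $\lambda\in[r_1,r_2]\subseteq$ the convex hull of $r_1,\overline{r_1},r_2,\overline{r_2}$.

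For $n_1>0$ one has $C_f(t)=|P(t)|^2+n_1>0$ for real $t$, so $C_f$ has two conjugate pairs of nonreal roots $\{\zeta_1,\overline{\zeta_1}\},\{\zeta_2,\overline{\zeta_2}\}$; their sum being $0$, their real parts are $\pm\rho$ with $\rho=\Re\zeta_1=-\Re\zeta_2\ge 0$, and I set $\sigma_j=\lvert\Im\zeta_j\rvert$. After a real translation and, if needed, conjugating in $C$, normalize $\Re\lambda=0$, $\Im\lambda=m\ge 0$, and write $r_1=a+i(m+\delta)$, $r_2=-a+i(m-\delta)$ with $a\ge0$. Writing $A_j=\lvert r_j\rvert^2$ and $d_j=\lvert\zeta_j\rvert^2=\rho^2+\sigma_j^2$ and matching the coefficients in
\[
(x^2-2ax+A_1)(x^2+2ax+A_2)+n_1=(x^2-2\rho x+d_1)(x^2+2\rho x+d_2),
\]
then eliminating $d_1,d_2$ (the degenerate case $\rho=0$ is immediate and forces $\lambda$ purely imaginary), a careful but elementary computation yields the identity
\[
n_1=d_1d_2-A_1A_2=\frac{4(m^2+\rho^2)(\rho^2-a^2)(\rho^2+\delta^2)}{\rho^2}.
\]
Since $n_1\ge0$ and $m^2+\rho^2,\rho^2>0$, $\rho^2+\delta^2\ge0$, this forces $\rho^2\ge a^2$. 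A second short computation gives
\[
\sigma_1^2\sigma_2^2-(m^2-\delta^2+a^2-\rho^2)^2=\frac{4m^2(\rho^2-a^2)(\rho^2+\delta^2)}{\rho^2}\ge 0,
\]
hence $\sigma_1\sigma_2\ge m^2-\delta^2+a^2-\rho^2$; combined with $\sigma_1^2+\sigma_2^2=2(m^2+\delta^2-a^2+\rho^2)$ this gives $(\sigma_1+\sigma_2)^2\ge 4m^2$. Since the convex hull inside $C$ of $\zeta_1,\overline{\zeta_1},\zeta_2,\overline{\zeta_2}$ contains the point $\lambda=im$ as soon as $m\le\tfrac12(\sigma_1+\sigma_2)$, this finishes the argument.

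The main obstacle I expect is precisely the first displayed identity of the last paragraph: the ``miracle'' that, after matching coefficients, the only possibly-negative factor on the right is $\rho^2-a^2$, which is what upgrades the trivial $n_1\ge0$ to the geometrically relevant $\rho^2\ge a^2$. The other points requiring care are the structural collapse $C_f=\overline{P}P+n_1$ (which uses $b,c_0\in C$ and the twisting relation $zc_1=c_1\overline{z}$), the bookkeeping around the failure of $(gh)(\lambda)=g(\lambda)h(\lambda)$ over octonions, and checking that the convex hull of the four roots of $C_f$ inside $C$ is contained in the convex hull of all roots of $C_f$ in $\mathbb{O}$.
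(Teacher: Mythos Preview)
Your argument is correct: both displayed identities hold (the first falls out once one notices that $U^2+(m^2+\delta^2)U+m^2\delta^2=(U+m^2)(U+\delta^2)$ with $U=\rho^2$), and the final convex-hull step is exactly the observation that the segment from $(-\rho,\sigma_2)$ to $(\rho,\sigma_1)$ crosses the imaginary axis at height $(\sigma_1+\sigma_2)/2$. Two small points deserve tightening. First, the twisting relation $zc_1=c_1\overline{z}$ is better justified directly from the bilinear form (orthogonality $\langle I,c_1\rangle=0$ for the imaginary unit $I$ of $C$ gives $Ic_1+c_1I=0$) rather than by invoking a Cayley--Dickson doubling ``starting at $C$''. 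Second, your $\rho=0$ branch is too terse: there equation (b) gives $am\delta=0$, and with $m>0$ one checks that only $a=0$ is compatible with $\sigma_1^2,\sigma_2^2$ being real and nonnegative, whence $\sigma_1^2+\sigma_2^2=2(m^2+\delta^2)\ge 2m^2$ already gives $\max(\sigma_1,\sigma_2)\ge m$.

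That said, the paper's proof is far shorter and takes a genuinely different route. It invokes the known linear factorization of octonion polynomials to write $f(x)=(c(x-a))(x-b)$; then $f'(x)=2cx-(ca+cb)$ has the unique root $\tfrac12(a+b)$, while multiplicativity of the norm on the alternative algebra $\mathbb{O}\otimes_{\mathbb{R}}\mathbb{R}(x)$ gives $C_f(x)=\norm(c)\,p_a(x)\,p_b(x)$, so both $a$ and $b$ are roots of $C_f$ and the critical point is their midpoint. Your approach trades this one-line appeal to an external factorization theorem for a self-contained but laborious computation; its payoff is the structural identity $C_f=\overline{P}P+\norm(c_1)$, which is of independent interest and makes transparent why the case $n_1=0$ collapses immediately. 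The paper's approach buys brevity at the cost of importing a nontrivial result.
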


\begin{proof}
By \cite{Chapman:2020b}, $f(x)$ factors into linear pieces $f(x)=(c(x-a))(x-b)=cx^2-(ca+cb)x+(ca)b$.
Then $f'(x)=2cx-(ca+cb)$.
The only root of $f'(x)$ is thus $\frac{1}{2}(a+b)$.
The element $b$ is a root of $f(x)$, and hence a root of $C_f(x)$ as well. The element $a$ is not necessarily a root of $f(x)$, but it is a root of $C_f(x)$, and therefore the only root of $f'(x)$ is in the convex hull of the roots of $C_f(x)$.
\end{proof}

If the degree of $f(x)$ is at least $3$, then Theorem~\ref{thm:quadratic-polynomial} does not hold even in the case of $\mathbb{H}$, see~\cite[Corollary~1]{Ghiloni:2018}. Besides, as the following example shows, if $A$ is at least $16$-dimensional locally-complex Cayley--Dickson algebra, then Theorem~\ref{thm:quadratic-polynomial} does not hold also for quadratic polynomials over $A$.

\begin{exmpl}
Let $A$ be at least $16$-dimensional locally-complex Cayley--Dickson algebra. Consider $a = e_1 + e_{10}$ and $b = e_7 + e_{12}$ from Example~\ref{exmpl:companion-fails}, and let $f(x) = ax^2$. Then $f'(x) = 2ax$ and $C_f(x) = 2x^4$, so any element from $\mathbb{R}b$ is a root of $f'(x)$, but the convex hull of the roots of $C_f(x)$ is $\{ 0 \}$.
\end{exmpl}

%
%
%

Let $\langle a, b \rangle$ denote a real-valued symmetric bilinear form associated with the quadratic form $\norm(a)$ on an arbitrary Cayley--Dickson algebra $A$ over $\mathbb{R}$. Then $\langle a, a \rangle = \norm(a)$ for all $a \in A$. If $A$ is a locally-complex Cayley--Dickson algebra over $\mathbb{R}$, then $\langle a, b \rangle$ is the Euclidean inner product, since $\norm(a)$ is the Euclidean norm squared.

\begin{lem} {\rm \cite[Lemma 1.3]{Moreno:1998}, \cite[Lemma 4.2]{Guterman-Zhilina:2020}} \label{lemma:inner-product-movement}
	Let $A$ be an arbitrary Cayley--Dickson algebra over $\mathbb{R}$, $a, b, c \in A$. Then $\langle a, bc \rangle = \langle a \bar{c}, b \rangle = \langle \bar{b}a, c \rangle$.
\end{lem}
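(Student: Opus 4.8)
The claim is that for an arbitrary Cayley--Dickson algebra $A$ over $\mathbb{R}$ and any $a,b,c\in A$, one has $\langle a, bc\rangle = \langle a\bar{c}, b\rangle = \langle \bar{b}a, c\rangle$. The natural plan is to reduce everything to the fundamental identity $\langle x,y\rangle = \tfrac{1}{2}\bigl(\norm(x+y) - \norm(x) - \norm(y)\bigr)$ together with the polarization of the quadratic identity $\overline{z}z = \norm(z)$, which gives $\overline{x}y + \overline{y}x = 2\langle x,y\rangle \in \mathbb{R}$. Equivalently, $\langle x,y\rangle = \tfrac{1}{2}\tr(\overline{x}y) = \tfrac{1}{2}\tr(\bar y x)$, using that $\tr$ is $\mathbb{R}$-valued and $\tr(z) = \tr(\bar z)$. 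I would take this trace formula as the working definition of the bilinear form and verify the two displayed equalities by manipulating traces.

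First I would establish the key auxiliary fact $\tr((uv)w) = \tr(u(vw))$ for all $u,v,w\in A$, i.e.\ that the trace of a product is "associative" even though $A$ need not be. This follows from flexibility and the linearized form of the quadratic relation: writing $\tr(z) = z + \bar z$ and using $\overline{uv} = \bar v\bar u$ (which holds in every Cayley--Dickson algebra), one reduces $\tr((uv)w) - \tr(u(vw))$ to a combination of associator terms whose trace vanishes because the associator of any three elements has zero trace in a flexible quadratic algebra — alternatively one can cite that $A$ is flexible and that $\langle [x,y],z\rangle$-type expressions are totally skew, which is exactly the content of the cited references. Once this is in hand, the computation is short: $2\langle a, bc\rangle = \tr(\bar a(bc))$; I want to rewrite this as $\tr((\overline{a\bar c})b) = \tr((c\bar a)b)$. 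Using $\overline{a\bar c} = c\bar a$ and the trace-associativity, $\tr(\bar a(bc)) = \tr((\bar a b)c)$, and then I would move $\bar a$ across using $\tr(xy)=\tr(yx)$ (cyclicity of trace, which also follows from $\overline{xy}=\bar y\bar x$ and $\tr(z)=\tr(\bar z)$) to reach $\tr(c(\bar a b)) = \tr((c\bar a)b) = \tr(\overline{(a\bar c)}\,b) = 2\langle a\bar c, b\rangle$. The second equality $\langle a,bc\rangle = \langle \bar b a, c\rangle$ is symmetric: $2\langle a,bc\rangle = \tr(\bar a(bc)) = \tr((\bar a b)c)$, and $\overline{\bar a b} = \bar b a$, so this is $\tr(\overline{(\bar b a)}\,c) = 2\langle \bar b a, c\rangle$.

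The main obstacle — and the only step that is not bookkeeping with $\tr$ and the involution — is the trace-associativity identity $\tr((uv)w) = \tr(u(vw))$, together with the cyclicity $\tr(uv) = \tr(vu)$, for a genuinely non-associative (and, beyond sedenions, non-alternative) algebra. I would handle this using only flexibility, power-associativity, and the quadratic identity: cyclicity of the trace of a product of two elements is immediate from $\overline{uv} = \bar v \bar u$ and $\tr(z) = \tr(\bar z)$, while for three factors one expands $(uv)w - u(vw)$ as the associator $[u,v,w]$ and shows $\tr[u,v,w] = 0$; in a flexible algebra the associator is alternating up to sign in a way that forces its trace component to vanish, which is precisely Lemma 1.3 of \cite{Moreno:1998}. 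Since the statement is quoted from \cite{Moreno:1998} and \cite{Guterman-Zhilina:2020}, in the write-up I would simply reference those sources for this identity rather than reprove it, and present the two-line trace manipulation above as the proof of the displayed equalities.
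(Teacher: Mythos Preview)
The paper does not prove this lemma at all; it simply records the two citations. Your plan is correct and actually supplies an argument where the paper gives none: the reduction to $\tr(uv)=\tr(vu)$ and $\tr\bigl((uv)w\bigr)=\tr\bigl(u(vw)\bigr)$ is valid, and both identities do follow from flexibility together with $\overline{xy}=\bar y\,\bar x$ and the fact that scalars lie in the nucleus (concretely, $\tr[u,v,w]=[u,v,w]-[\bar w,\bar v,\bar u]=[u,v,w]+[\bar u,\bar v,\bar w]$ by flexibility, and replacing each argument by its imaginary part flips the sign of the second associator, giving zero). Be aware, though, that what you propose to cite from \cite{Moreno:1998} for the trace-associativity step \emph{is} the inner-product identity you are proving, in an equivalent form---so your fallback write-up collapses to exactly what the paper does.
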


Let $A$ be a locally-complex Cayley--Dickson algebra.
Then any element $I \in A$ of trace $0$ and norm $1$ generates a subalgebra $\mathbb{R}[I] \cong \mathbb{C}$ via $I \mapsto i$, which we denote by $\mathbb{C}_I$. We also denote by $\pi_I: A \to \mathbb{C}_I$ the orthogonal projection onto $\mathbb{C}_I$. Now any $f(x) \in A[x]$ decomposes as $f(x) = f_I(x) + f_I^{\perp}(x)$, where $f_I(x)$ has coefficients in $\mathbb{C}_I$ and $f_I^{\perp}(x)$ has coefficients in $\mathbb{C}_I^{\perp}$. Here $\mathbb{C}_I^{\perp} = \{ a \in A \; | \; \langle a, b \rangle = 0 \text{ for all } b \in \mathbb{C}_I \}$.

\begin{prop} \label{prop:projection-roots}
	Let $A$ be a locally-complex Cayley--Dickson algebra, $I \in A$ be an element of trace $0$ and norm $1$, $f(x) \in A[x]$. Then for any $\lambda \in \mathbb{C}_I$ we have $f_I(\lambda) \in \mathbb{C}_I$ and $f_I^{\perp}(\lambda) \in \mathbb{C}_I^{\perp}$. Thus $f(\lambda) = 0$ if and only if $f_I(\lambda) = f_I^{\perp}(\lambda) = 0$. 
\end{prop}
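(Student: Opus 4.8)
The plan is to prove the two membership statements $f_I(\lambda)\in\mathbb{C}_I$ and $f_I^{\perp}(\lambda)\in\mathbb{C}_I^{\perp}$ separately, and then deduce the ``if and only if'' from the fact that $A=\mathbb{C}_I\oplus\mathbb{C}_I^{\perp}$ is an orthogonal direct sum.

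First I would dispose of $f_I(\lambda)\in\mathbb{C}_I$, which requires essentially no work. Since $\tr(I)=0$ we have $\bar I=\tr(I)-I=-I$, so $\mathbb{C}_I=\mathbb{R}+\mathbb{R}I$ is a commutative associative subalgebra of $A$ that is closed under the involution. Hence for $\lambda\in\mathbb{C}_I$ every power $\lambda^k$ again lies in $\mathbb{C}_I$, and if $f_I(x)=\sum_k c_k x^k$ with all $c_k\in\mathbb{C}_I$, then each $c_k\lambda^k$ is a product of two elements of the subalgebra $\mathbb{C}_I$ and so lies in $\mathbb{C}_I$; summing gives $f_I(\lambda)\in\mathbb{C}_I$.

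The substantive step is the claim that $\mathbb{C}_I^{\perp}$ is stable under right multiplication by $\mathbb{C}_I$, i.e. $a\mu\in\mathbb{C}_I^{\perp}$ whenever $a\in\mathbb{C}_I^{\perp}$ and $\mu\in\mathbb{C}_I$. To check this, take an arbitrary $\nu\in\mathbb{C}_I$ and apply Lemma~\ref{lemma:inner-product-movement} to move $\mu$ to the other argument of the form: $\langle a\mu,\nu\rangle=\langle a,\nu\bar\mu\rangle$. Because $\mathbb{C}_I$ is closed under the involution and under multiplication, $\nu\bar\mu\in\mathbb{C}_I$, whence $\langle a,\nu\bar\mu\rangle=0$ since $a\in\mathbb{C}_I^{\perp}$; as $\nu$ was arbitrary, $a\mu\in\mathbb{C}_I^{\perp}$. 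Applying this with $\mu=\lambda^k\in\mathbb{C}_I$ to the coefficients of $f_I^{\perp}(x)=\sum_k a_k x^k$ shows each $a_k\lambda^k\in\mathbb{C}_I^{\perp}$, and since $\mathbb{C}_I^{\perp}$ is a linear subspace, $f_I^{\perp}(\lambda)\in\mathbb{C}_I^{\perp}$.

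Finally, since $A$ is locally-complex, $\langle\,\cdot\,,\,\cdot\,\rangle$ is the Euclidean inner product, hence positive definite, so $A=\mathbb{C}_I\oplus\mathbb{C}_I^{\perp}$ is an orthogonal direct sum with $\mathbb{C}_I\cap\mathbb{C}_I^{\perp}=\{0\}$. Writing the coefficients of $f$ as $c_k+a_k$ with $c_k\in\mathbb{C}_I$ and $a_k\in\mathbb{C}_I^{\perp}$, linearity of the substitution map gives $f(\lambda)=f_I(\lambda)+f_I^{\perp}(\lambda)$ with the two summands lying in $\mathbb{C}_I$ and $\mathbb{C}_I^{\perp}$ respectively; therefore $f(\lambda)=0$ exactly when both summands vanish. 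I do not expect a genuine obstacle here: the only point needing any care is the single use of Lemma~\ref{lemma:inner-product-movement} together with the observation that $\mathbb{C}_I$ is involution-closed, and that is immediate.
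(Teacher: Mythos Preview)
Your argument is correct and matches the paper's own proof essentially verbatim: both dispose of $f_I(\lambda)\in\mathbb{C}_I$ by the subalgebra property, and both establish $f_I^{\perp}(\lambda)\in\mathbb{C}_I^{\perp}$ by applying Lemma~\ref{lemma:inner-product-movement} to get $\langle a_k\lambda^k,b\rangle=\langle a_k,b\overline{\lambda^k}\rangle=0$ since $b\overline{\lambda^k}\in\mathbb{C}_I$. The only difference is that you spell out the direct-sum conclusion and the involution-closure of $\mathbb{C}_I$ a bit more explicitly than the paper does.
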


\begin{proof}
	Since $\mathbb{C}_I$ is a subalgebra in $A$, it is clear that $f_I(\lambda) \in \mathbb{C}_I$. It remains to prove that $f_I^{\perp}(\lambda) \in \mathbb{C}_I^{\perp}$, that is, $\langle f_I^{\perp}(\lambda), b \rangle = 0$ for any $b \in \mathbb{C}_I$. By linearity, it is sufficient to show that $\langle a_k \lambda^k, b \rangle = 0$, where $k \in \mathbb{N}_0$ and $a_k \in \mathbb{C}_I^{\perp}$. Indeed, by Lemma~\ref{lemma:inner-product-movement}, we have $\langle a_k \lambda^k, b \rangle = \langle a_k, b \overline{\lambda^k} \rangle = 0$, since $b \overline{\lambda^k} \in \mathbb{C}_I$. Then the statement follows.
\end{proof}

If $f_I(x)$ is not constant, we denote by $K_{\mathbb{C}_I}(f_I)$ the convex hull of the roots of $f_I(x)$ in $\mathbb{C}_I$. Otherwise, we set $K_{\mathbb{C}_I}(f_I) = \mathbb{C}_I$. The following definition is motivated by~\cite[Definition~2]{Ghiloni:2018}, where polynomials over $\mathbb{H}$ were considered.

\begin{defn}
Given a locally-complex Cayley--Dickson algebra $A$ and $f(x) \in A[x]$, the Gauss--Lucas snail $\sn(f)$ is defined to be the union of $K_{\mathbb{C}_I}(f_I)$, where $I$ ranges over all elements of trace $0$ and norm $1$ in~$A$.
\end{defn}

\begin{thm} \label{thm:Gauss-Lucas}
Let $A$ be a locally-complex Cayley--Dickson algebra, $f(x) \in A[x]$. Then the roots of $f'(x)$ are contained in $\sn(f)$.
\end{thm}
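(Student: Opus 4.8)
The plan is to reduce the statement to the classical complex Gauss--Lucas theorem applied inside a single copy of $\mathbb{C}$. Fix a root $\lambda \in A$ of $f'(x)$. There are two cases. If $\lambda \in \mathbb{R}$, pick any $I$ of trace $0$ and norm $1$; then $\lambda \in \mathbb{C}_I$, and it suffices to show $\lambda \in K_{\mathbb{C}_I}(f'_I) \subseteq \sn(f)$ after relating $K_{\mathbb{C}_I}(f_I)$ to the roots of $f'_I$. If $\lambda \notin \mathbb{R}$, then $\lambda$ generates a complex subfield $C$ of $A$; we normalize $\Im(\lambda)$ to obtain $I = \Im(\lambda)/|\Im(\lambda)|$ of trace $0$ and norm $1$, so that $\lambda \in \mathbb{C}_I = C$.

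First I would use Proposition~\ref{prop:projection-roots} applied to $f'(x)$: write $f'(x) = (f')_I(x) + (f')_I^{\perp}(x)$; since $\lambda \in \mathbb{C}_I$ we get $(f')_I(\lambda) \in \mathbb{C}_I$, $(f')_I^{\perp}(\lambda) \in \mathbb{C}_I^{\perp}$, and $f'(\lambda) = 0$ forces $(f')_I(\lambda) = 0$. Next I would observe that the decomposition $f = f_I + f_I^{\perp}$ is compatible with the formal derivative, since $\pi_I$ is $\mathbb{R}$-linear and the derivative acts coefficientwise: $(f')_I = (f_I)'$ and $(f')_I^{\perp} = (f_I^{\perp})'$. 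Thus $(f_I)'(\lambda) = 0$, i.e., $\lambda$ is a critical point of the genuine complex polynomial $f_I(x) \in \mathbb{C}_I[x]$. By the classical Gauss--Lucas theorem in $\mathbb{C}_I \cong \mathbb{C}$, the critical points of $f_I$ lie in the convex hull of its roots, so $\lambda \in K_{\mathbb{C}_I}(f_I) \subseteq \sn(f)$, which is exactly what we want. (If $f_I$ is constant, then $\lambda \in \mathbb{C}_I = K_{\mathbb{C}_I}(f_I) \subseteq \sn(f)$ trivially; if $f_I$ is nonconstant but has degree $1$, then $(f_I)'$ is a nonzero constant, so this subcase does not arise with $\lambda$ a critical point unless $f_I$ is constant.)

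The step I expect to be slightly delicate is confirming that $\pi_I$ commutes with the formal derivative and that $(f_I)'$ is the genuine complex derivative of a genuine complex polynomial, rather than merely a formal object; this is where one must be careful that $x$ is central and that $\mathbb{C}_I$ is a subalgebra, so that powers $\lambda^k$ computed in $A$ agree with powers computed in $\mathbb{C}_I$. Both facts are immediate but worth stating. Everything else is a direct appeal to Proposition~\ref{prop:projection-roots} and to the classical theorem, together with the trivial observation that $K_{\mathbb{C}_I}(f_I)$ is by definition one of the sets whose union is $\sn(f)$.
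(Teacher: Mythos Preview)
Your proposal is correct and follows essentially the same approach as the paper: choose $I$ with $\lambda \in \mathbb{C}_I$, use Proposition~\ref{prop:projection-roots} to get $(f')_I(\lambda)=0$, note that $(f')_I=(f_I)'$ since $\pi_I$ is $\mathbb{R}$-linear, and apply the classical Gauss--Lucas theorem in $\mathbb{C}_I$. (Minor typo: in your first paragraph ``$K_{\mathbb{C}_I}(f'_I)$'' should read ``$K_{\mathbb{C}_I}(f_I)$''.)
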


\begin{proof}
Consider a root $\lambda$ of $f'(x)$.
This element is contained in some $\mathbb{C}_I$.
Therefore, it is also a root of $f'_I(x)$.
An easy computation shows that $f'_I(x)$ is the derivative of $f_I(x)$.
Hence, by the (complex) Gauss--Lucas theorem, $\lambda$ is contained in the convex hull of the roots of $f_I(x)$, and therefore in $\sn(f)$.
\end{proof}

Since the snail is admittedly difficult to compute, one should look for a more practical bound for the critical points. Such bounds can be obtained from spherical bounds on the roots themselves. Given a polynomial $f(x) \in A[x]$, we denote by $\rho(f)$ the spectral radius of $f(x)$, that is,
$$
\rho(f) = \sup_{\substack{\lambda \in A, \\ f(\lambda) = 0}} |\lambda|.
$$

\begin{lem} \label{lem:cauchy-bound}
Let $A$ be a locally-complex Cayley--Dickson algebra, and let $f(x) = x^n + a_{n-1} x^{n-1} + \dots + a_1 x + a_0 \in A[x]$ be a monic polynomial with $\deg f \geq 1$. For any $r \in \sn(f)$ we have
\begin{align*}
    |r| &< R_1(f) = \sqrt{ 1 + |a_{n-1}|^2 + \dots + |a_1|^2 + |a_0|^2},\\
    |r| &< R_2(f) = 1 + \max_{0 \leq k \leq n-1} |a_k|,\\
    |r| &\leq R_3(f) = \max \left\{ 1, |a_{n-1}| + \dots + |a_1| + |a_0| \right\}.
\end{align*}
\end{lem}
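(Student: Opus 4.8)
The plan is to reduce the statement to the classical Cauchy-type bounds for a single complex polynomial, applied one slice at a time. Fix an element $I \in A$ of trace $0$ and norm $1$, so that $\mathbb{C}_I = \mathbb{R}[I] \cong \mathbb{C}$, and consider the slice polynomial $f_I(x) = \sum_{k=0}^n \pi_I(a_k)\, x^k$. Since $f$ is monic of degree $n \geq 1$ and its leading coefficient $1$ lies in $\mathbb{R} \subseteq \mathbb{C}_I$, we have $\pi_I(1) = 1$, so $f_I(x)$ is again monic of degree $n$ over $\mathbb{C}_I$; in particular $f_I$ is never constant, so $K_{\mathbb{C}_I}(f_I)$ really is the convex hull of the (nonempty, finite) set of roots of $f_I$ in $\mathbb{C}_I$. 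Moreover, since $\pi_I$ is the orthogonal projection for the Euclidean inner product $\langle\cdot,\cdot\rangle$, we have $|\pi_I(a_k)| \leq |a_k|$ for every $k$.

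Next I would record the standard one-variable bounds: for a monic complex polynomial $g(x) = x^n + b_{n-1}x^{n-1} + \dots + b_0$, every root $z$ satisfies $|z| < \sqrt{1 + |b_{n-1}|^2 + \dots + |b_0|^2}$, $|z| < 1 + \max_k |b_k|$, and $|z| \leq \max\{1, |b_{n-1}| + \dots + |b_0|\}$. Each follows by writing $|z|^n = \bigl|\sum_{k<n} b_k z^k\bigr|$, splitting into the cases $|z| \leq 1$ (where all three bounds hold trivially, as each right-hand side is $\geq 1$) and $|z| > 1$, and in the latter case applying the triangle inequality together with the geometric-series estimates $\sum_{k=0}^{n-1}|z|^k < |z|^n/(|z|-1)$ and $\sum_{k=0}^{n-1}|z|^{2k} < |z|^{2n}/(|z|^2-1)$, the last bound also using Cauchy--Schwarz. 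Applying these with $g = f_I$ and then replacing each $|\pi_I(a_k)|$ by the larger quantity $|a_k|$ shows that every root of $f_I$ lies in $\{ z \in \mathbb{C}_I : |z| < R_i(f)\}$ for $i = 1, 2$ and in $\{ z \in \mathbb{C}_I : |z| \leq R_3(f)\}$.

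Finally, since a disk (open or closed) centered at the origin is convex, $K_{\mathbb{C}_I}(f_I)$ --- the convex hull of the roots of $f_I$ --- is contained in the same disk. As $R_1(f), R_2(f), R_3(f)$ do not depend on $I$, taking the union over all trace-$0$, norm-$1$ elements $I$, which by definition produces $\sn(f)$, yields the three asserted inequalities for every $r \in \sn(f)$. There is no serious obstacle here: the only point that requires care is the bookkeeping at the start, namely that passing to the slice $f_I$ preserves monicity and degree and does not increase the coefficient norms, after which the argument is exactly the classical complex estimate.
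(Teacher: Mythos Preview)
Your argument is correct and follows essentially the same route as the paper's proof: pass to a slice $\mathbb{C}_I$, observe that $f_I$ is still monic of degree $n$ with $|\pi_I(a_k)| \leq |a_k|$, apply the classical Cauchy-type bounds to $f_I$, and use convexity of the disk and monotonicity of $R_j$ in the $|a_k|$ to conclude. The only difference is cosmetic: the paper cites the complex bounds from Rahman--Schmeisser, whereas you sketch their proofs directly.
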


\begin{proof}
Since $r \in \sn(f)$, there exists some $I$ such that $r \in K_{\mathbb{C}_I}(f_I)$.  The polynomial $f(x)$ is monic, so $f_I(x) = x^n + \pi_I(a_{n-1}) x^{n-1} + \dots + \pi_I(a_1) x + \pi_I(a_0)$ and $\deg f_I = \deg f \geq 1$. Hence $K_{\mathbb{C}_I}(f_I)$ is the convex hull of the roots of $f_I(x)$ in $\mathbb{C}_I$. It follows that $|r|$ is bounded from above by the maximum value of the moduli of the roots of $f_I(x)$. It is a classical result that the desired estimates hold for the roots of a complex polynomial $f_I(x)$, and its proof can be found in, e.g., \cite[Theorem~8.1.7(i)]{RahmanSchmeisser:2002} with $\lambda = 1$ and $p = 2$. It remains to note that $R_j$ are increasing functions of $|a_k|$, $j \in \{ 1, 2, 3 \}$, $k \in \{ 0, 1, \dots, n-1 \}$, and that, as is well known from linear algebra, $|\pi_I(a_k)| \le |a_k|$ for all~$k$.
\end{proof}

\begin{thm} \label{thm:compact-snail}
Let $A$ be a locally-complex Cayley--Dickson algebra, and let $f(x) \in A[x]$ be a monic polynomial with $\deg f \geq 1$. Then $\sn(f)$ is a compact subset of $A$ which is contained in an open ($j = 1, 2$) or a closed ($j = 3$) ball with center at the origin and radius $R_j(f)$.
\end{thm}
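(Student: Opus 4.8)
The plan is to read off the ball containments directly from Lemma~\ref{lem:cauchy-bound}, and then obtain compactness of $\sn(f)$ by showing that this set is closed and bounded in the finite-dimensional real vector space $A \cong \mathbb{R}^{2^n}$. For the containments, every $r \in \sn(f)$ lies in some $K_{\mathbb{C}_I}(f_I)$, so Lemma~\ref{lem:cauchy-bound} gives $|r| < R_1(f)$, $|r| < R_2(f)$, and $|r| \le R_3(f)$; hence $\sn(f)$ is contained in the open balls of radii $R_1(f)$ and $R_2(f)$ and in the closed ball of radius $R_3(f)$ about the origin, and in particular it is bounded. It remains to prove that $\sn(f)$ is closed.

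Set $S = \{ I \in A : \tr(I) = 0, \ \norm(I) = 1 \}$. Since the norm on a locally-complex Cayley--Dickson algebra is the Euclidean one, $S$ is the unit sphere of the subspace of trace-zero elements, hence compact. Given $r_k \in \sn(f)$ with $r_k \to r$, choose $I_k \in S$ with $r_k \in K_{\mathbb{C}_{I_k}}(f_{I_k})$ and, passing to a subsequence, assume $I_k \to I \in S$. I claim $r \in K_{\mathbb{C}_I}(f_I) \subseteq \sn(f)$. First, for any $J \in S$ the pair $\{1, J\}$ is orthonormal for $\langle \cdot, \cdot \rangle$: indeed $\langle 1,1\rangle = \langle J,J\rangle = 1$, and $\langle 1, J\rangle = \tfrac12(\norm(1+J) - 2) = 0$ because $\norm(1+J) = \overline{(1+J)}(1+J) = (1-J)(1+J) = 1 - J^2 = 2$. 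Hence the orthogonal projection onto $\mathbb{C}_J$ is $\pi_J(a) = \langle a, 1\rangle\cdot 1 + \langle a, J\rangle\cdot J$, which is jointly continuous in $(J,a)$; from $r_k = \pi_{I_k}(r_k)$, $r_k \to r$, $I_k \to I$ we get $r = \pi_I(r) \in \mathbb{C}_I$.

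Next, since $f$ is monic of degree $n \ge 1$, the polynomial $f_J(x) = x^n + \pi_J(a_{n-1})x^{n-1} + \dots + \pi_J(a_0)$ is monic of degree $n$ for every $J \in S$, so $K_{\mathbb{C}_J}(f_J)$ is genuinely the convex hull of the $n$ roots of $f_J$ in $\mathbb{C}_J$. Identifying $\mathbb{C}_J$ with $\mathbb{C}$ via $\alpha 1 + \beta J \mapsto \alpha + \beta i$, the coefficients of $f_J$ are $\langle a_j, 1\rangle + \langle a_j, J\rangle i$, which converge to the coefficients of $f_I$ as $J = I_k \to I$; consequently the roots of the $f_{I_k}$ are uniformly bounded (classical Cauchy bound for bounded coefficients) and every limit of roots of $f_{I_k}$ is a root of $f_I$. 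By Carathéodory's theorem in the real plane $\mathbb{C}_{I_k}$, write $r_k = \sum_{\ell=1}^{3} t_\ell^{(k)} w_\ell^{(k)}$ with $t_\ell^{(k)} \ge 0$, $\sum_\ell t_\ell^{(k)} = 1$, and $w_\ell^{(k)}$ roots of $f_{I_k}$. Passing to a further subsequence, $t_\ell^{(k)} \to t_\ell$ and $w_\ell^{(k)} \to w_\ell$, where each $w_\ell$ is a root of $f_I$ lying in $\mathbb{C}_I$. Then $r = \lim_k r_k = \sum_\ell t_\ell w_\ell \in K_{\mathbb{C}_I}(f_I)$, so $\sn(f)$ is closed, hence compact.

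The main obstacle is the last step: making sense of "the roots of $f_J$ depend continuously on $J$" when multiplicities forbid a continuous global selection of roots. I sidestep this by never selecting individual roots, instead expressing points of the convex hulls as convex combinations of at most three roots (Carathéodory) and extracting convergent subsequences of both the barycentric coefficients and the roots, using monicity of $f$ to keep $\deg f_J = n$ and to prevent roots from escaping to infinity. It is precisely the monicity hypothesis that makes $\sn(f)$ bounded and closed; for non-monic $f$ one should not expect the snail to be even bounded.
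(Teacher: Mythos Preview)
Your argument is correct and follows the same overall strategy as the paper: ball containment and boundedness come from Lemma~\ref{lem:cauchy-bound}, and closedness from the continuous dependence of $K_{\mathbb{C}_I}(f_I)$ on $I$. The paper simply cites \cite{Ghiloni:2018} for the closedness (continuity of $I \mapsto K_{\mathbb{C}_I}(f_I)$ in the Hausdorff sense), whereas you supply a self-contained sequential proof: compactness of the imaginary unit sphere $S$, the explicit formula $\pi_J(a) = \langle a,1\rangle + \langle a,J\rangle J$ showing joint continuity of the projections, a uniform Cauchy bound on the roots of the $f_{I_k}$, and Carath\'eodory's theorem in the $2$-dimensional plane $\mathbb{C}_{I_k}$ to extract convergent convex combinations. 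This buys you independence from the quaternionic reference and makes the argument fully transparent in the general locally-complex setting; the only thing you leave slightly implicit is that ``$w_\ell^{(k)} \to w_\ell$'' is happening simultaneously in $A$ and under the identifications $\mathbb{C}_{I_k}\cong\mathbb{C}$, which is justified since $|\alpha + \beta J|_A = \sqrt{\alpha^2+\beta^2}$ for $J \in S$.
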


\begin{proof}
It is proved similarly to~\cite[Remarks~2 and~3]{Ghiloni:2018} that for any $I$ the set $K_{\mathbb{C}_I}$ is a compact subset of $\mathbb{C}_I$ which depends continuously on $I$, and that $\sn(f)$ is a closed subset of $A$. It remains to apply Lemma~\ref{lem:cauchy-bound} which shows that $\sn(f)$ is a bounded, and thus compact, subset of $A$.
\end{proof}

\begin{exmpl}
If $f(x) \in A[x]$ is a non-monic polynomial then $\sn(f)$ need be neither closed nor bounded even for $A = \mathbb{H}$, see~\cite[Example~1]{Ghiloni:2018}. The same example applies for an arbitrary locally-complex Cayley--Dickson algebra of dimension at least~4. Let $E_n = \{ e_m \; | \; m = 0, \dots, 2^n - 1\}$ be the standard basis of $A$, $i = e_1$. Consider $f(x) = ix^2 + x$. Then for an arbitrary $I \in A$ with trace~$0$ and norm~$1$ we have $f_I(x) = \alpha I x^2 + x$, where $\alpha = \langle I, i \rangle$. Hence $K_{\mathbb{C}_I}(f_I) = \{ 0 \}$ for $\alpha = 0$, and $K_{\mathbb{C}_I}(f_I)$ is a line segment from $0$ to $\alpha^{-1} I$ for $\alpha \neq 0$. It follows that $\sn(f) = \{ \lambda \in A \; | \; \tr(\lambda) = 0, \; 0 < \langle \lambda, i \rangle \leq 1 \} \cup \{ 0 \}$. Clearly, this set is neither closed nor bounded.
\end{exmpl}

\begin{cor}
Let $A$ be a locally-complex Cayley--Dickson algebra, and let $f(x) \in A[x]$ be a monic polynomial with $\deg f \geq 1$. Then the roots and the critical points of $f$ are contained in an open ($j = 1, 2$) or a closed ($j = 3$) ball with center at the origin and radius $R_j(f)$. In other words, 
$$
\rho(f), \: \rho(f') < R_1(f), \qquad \rho(f), \: \rho(f') < R_2(f), \qquad \rho(f), \: \rho(f') \leq R_3(f).
$$
\end{cor}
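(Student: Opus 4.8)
The plan is to show that both the root set of $f$ and the root set of $f'$ are contained in the Gauss--Lucas snail $\sn(f)$, and then to invoke Lemma~\ref{lem:cauchy-bound}, which already bounds every point of $\sn(f)$ by each $R_j(f)$ with the appropriate strict ($j=1,2$) or non-strict ($j=3$) inequality. Equivalently, one may combine this containment with Theorem~\ref{thm:compact-snail}, which places $\sn(f)$ inside the ball of radius $R_j(f)$.

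First I would dispatch the critical points, which is immediate: by Theorem~\ref{thm:Gauss-Lucas} every root of $f'(x)$ lies in $\sn(f)$, so Lemma~\ref{lem:cauchy-bound} gives $|\lambda| < R_1(f)$, $|\lambda| < R_2(f)$ and $|\lambda| \le R_3(f)$ for each critical point $\lambda$; taking the supremum over all critical points yields $\rho(f') < R_1(f)$, $\rho(f') < R_2(f)$ and $\rho(f') \le R_3(f)$.

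Next I would show that every root $\lambda$ of $f$ also lies in $\sn(f)$. If $\lambda \in \mathbb{R}$, it belongs to $\mathbb{C}_I$ for every admissible $I$; if $\lambda \notin \mathbb{R}$, then, since $A$ is locally-complex, setting $I = \Im(\lambda)/|\Im(\lambda)|$ produces an element of trace $0$ and norm $1$ with $\lambda \in \mathbb{C}_I$. In either case $f(\lambda) = 0$ forces $f_I(\lambda) = 0$ by Proposition~\ref{prop:projection-roots}. Since $f$ is monic, $f_I(x)$ is monic, hence nonconstant, so $\lambda$ is one of its roots in $\mathbb{C}_I$ and therefore lies in $K_{\mathbb{C}_I}(f_I) \subseteq \sn(f)$. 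Applying Lemma~\ref{lem:cauchy-bound} and taking the supremum over all roots of $f$ gives $\rho(f) < R_1(f)$, $\rho(f) < R_2(f)$ and $\rho(f) \le R_3(f)$, which completes the proof.

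There is essentially no obstacle here beyond assembling the earlier results; the only point that needs care is that the bound for the ordinary roots runs through Proposition~\ref{prop:projection-roots} rather than through the Gauss--Lucas-type Theorem~\ref{thm:Gauss-Lucas}, and that one must separately record the trivial case $\lambda \in \mathbb{R}$ so that a $\mathbb{C}_I$ containing $\lambda$ is actually available. Everything else is a direct appeal to Lemma~\ref{lem:cauchy-bound} (or Theorem~\ref{thm:compact-snail}).
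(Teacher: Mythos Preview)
Your proposal is correct and follows essentially the same route as the paper: for roots of $f$ you pass to some $\mathbb{C}_I$ containing $\lambda$, invoke Proposition~\ref{prop:projection-roots} to get $f_I(\lambda)=0$ and hence $\lambda\in\sn(f)$, then apply Lemma~\ref{lem:cauchy-bound}; for critical points you combine Theorem~\ref{thm:Gauss-Lucas} with Lemma~\ref{lem:cauchy-bound}. The only differences are cosmetic: you spell out the real/non-real case split for finding $I$ and the monicity argument ensuring $f_I$ is nonconstant, both of which the paper leaves implicit.
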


\begin{proof}
Since any root $r$ of $f(x)$ is contained in some $\mathbb{C}_I$, it follows from Proposition~\ref{prop:projection-roots} that it is also a root of $f_I(x)$, and thus $r \in \sn(f)$. Hence the estimates on $\rho(f)$ are obtained from Lemma~\ref{lem:cauchy-bound}. The estimates on $\rho(f')$ follow immediately from Theorem~\ref{thm:Gauss-Lucas} and Lemma~\ref{lem:cauchy-bound}.
\end{proof}

\begin{rem}
Other classical bounds on the roots of a complex polynomial which are given in~\cite[Theorem~8.1.7(i-ii)]{RahmanSchmeisser:2002} also hold for an arbitrary monic polynomial $f(x) \in A[x]$ and are proved similarly.
\end{rem}

If the dimension of $A$ is at least $4$, then it is not true in general that $\rho(f') \leq \rho(f)$. We construct a counterexample similarly to~\cite[Corollary~1]{Ghiloni:2018}.

\begin{exmpl}
Let $A = \mathbb{H}$ and $f(x) = (x-i)(x-j)(x-k) = x^3 - (i+j+k)x^2 + (i-j+k)x + 1$. Then $C_f(x) = (x^2+1)^3$. If $r$ is an arbitrary root of $f(x)$, then is also a root of $C_f(x)$, so $r^2 + 1 = 0$, and thus $|r| = 1$. Hence $\rho(f) = 1$.

We have $f'(x) = 3x^2 - 2(i+j+k)x + (i-j+k)$, so $C_{f'}(x) = 9x^4 + 12x^2 - 4x + 3$. It can be seen that all roots of $C_{f'}(x)$ in $\mathbb{C}$ have modulus greater than $1$, and thus the same holds for roots of $C_{f'}(x)$ in $\mathbb{H}$. Hence $\rho(f') > 1 = \rho(f)$.
\end{exmpl}

\begin{cor} \label{cor:nonmonic-polynomial}
For every polynomial $f(x)=a_n x^n+\dots+a_1 x+a_0 \in \mathbb{O}[x]$ with $n \geq 1$ and $a_n \neq 0$,
it holds that
\begin{align*}
    \rho(f), \: \rho(f') &< \widetilde{R}_1(f) = \tfrac{1}{|a_n|} \cdot \sqrt{ |a_n|^2 + |a_{n-1}|^2 + \dots + |a_1|^2 + |a_0|^2},\\
    \rho(f), \: \rho(f') &< \widetilde{R}_2(f) = 1 + \tfrac{1}{|a_n|} \cdot\max_{0 \leq k \leq n-1} |a_k|,\\
    \rho(f), \: \rho(f') &\leq \widetilde{R}_3(f) = \max \left\{ 1, \tfrac{1}{|a_n|} \cdot(|a_{n-1}| + \dots + |a_1| + |a_0|) \right\}.
\end{align*}
\end{cor}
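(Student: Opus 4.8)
The plan is to bound the modulus of every root of $f$, and separately every root of $f'$, by a direct Cauchy-type estimate, relying only on two structural features of $\mathbb{O}$: the norm form is multiplicative (octonion algebras are composition algebras), and it is positive definite, so that $|\cdot|$ is the Euclidean norm on $\mathbb{O}$ and obeys the triangle inequality. A reduction to the monic case via $g(x) = a_n^{-1}f(x)$ is not straightforward here, because over $\mathbb{O}$ substitution need not commute with left multiplication by a constant; instead one simply reproduces the classical argument coefficient by coefficient.

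First, let $r$ be a root of $f(x) = a_n x^n + \dots + a_1 x + a_0$ and rewrite $f(r) = 0$ as $a_n r^n = -(a_{n-1}r^{n-1} + \dots + a_1 r + a_0)$. Taking $|\cdot|$ and using multiplicativity of the norm on the left (so $|a_n r^n| = |a_n|\,|r|^n$, since also $|r^n| = |r|^n$) together with the triangle inequality on the right gives $|a_n|\,|r|^n \le \sum_{i=0}^{n-1}|a_i|\,|r|^i$, hence, dividing by $|a_n| > 0$, $|r|^n \le \sum_{i=0}^{n-1}\bigl(|a_i|/|a_n|\bigr)|r|^i$. This is exactly the inequality underlying the classical bounds on the moduli of the roots of a monic complex polynomial with coefficients $|a_i|/|a_n|$; by the estimates quoted in \cite[Theorem~8.1.7(i)]{RahmanSchmeisser:2002} and used in the proof of Lemma~\ref{lem:cauchy-bound}, it forces $|r| < \sqrt{1 + \sum_{i<n}(|a_i|/|a_n|)^2} = \widetilde{R}_1(f)$, $|r| < 1 + \max_{i<n}(|a_i|/|a_n|) = \widetilde{R}_2(f)$, and $|r| \le \max\{1,\ \sum_{i<n}|a_i|/|a_n|\} = \widetilde{R}_3(f)$. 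Passing to the supremum over all roots gives the three bounds on $\rho(f)$.

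For $\rho(f')$ I would run the identical argument on $f'(x) = n a_n x^{n-1} + (n-1)a_{n-1}x^{n-2} + \dots + 2a_2 x + a_1$. A root $r$ of $f'$ satisfies $n|a_n|\,|r|^{n-1} \le \sum_{k=1}^{n-1}k|a_k|\,|r|^{k-1}$, and dividing by $n|a_n|$ and using $k/n < 1$ yields $|r|^{n-1} \le \sum_{k=1}^{n-1}\bigl(|a_k|/|a_n|\bigr)|r|^{k-1}$. Applying the same classical estimates to a polynomial of degree $n-1$ whose coefficients are dominated by $|a_k|/|a_n|$ for $1 \le k \le n-1$, and noting that the $k = 0$ term occurring in each $\widetilde{R}_j(f)$ only enlarges it, we conclude $|r| < \widetilde{R}_1(f)$, $|r| < \widetilde{R}_2(f)$, $|r| \le \widetilde{R}_3(f)$; hence $\rho(f')$ satisfies the same bounds.

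The single point that genuinely needs the hypothesis is the multiplicativity identity $|a_n r^n| = |a_n|\,|r|^n$, which rests on the octonion norm form being multiplicative — a property that fails for the real sedenions and all higher Cayley--Dickson algebras, which is precisely why the statement is confined to $\mathbb{O}$. The rest is the routine Cauchy/Carmichael--Mason bookkeeping already carried out for monic polynomials in Lemma~\ref{lem:cauchy-bound}.
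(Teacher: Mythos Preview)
Your argument is correct and takes a genuinely different route from the paper. The paper reduces to the monic polynomial $g(x)=a_n^{-1}f(x)$ and, rather than claiming $g(\lambda)=a_n^{-1}f(\lambda)$ (which, as you note, fails in a non-associative algebra), argues via companion polynomials: alternativity of $\mathbb{O}\otimes_{\mathbb{R}}\mathbb{R}(x)$ gives $C_g(x)=\norm(a_n^{-1})C_f(x)$, and an external result (\cite[Theorem~3.4]{Chapman:2020a}) then identifies the conjugacy classes of roots and critical points of $f$ with those of $g$; finally one applies the snail-based monic bounds already established. Your approach bypasses all of this machinery: you use directly that $\mathbb{O}$ is a composition algebra to get $|a_nr^n|=|a_n|\,|r|^n$, feed the resulting inequality $|r|^n\le\sum_{i<n}(|a_i|/|a_n|)|r|^i$ into the classical Cauchy/Carmichael--Mason estimates, and repeat for $f'$ with the harmless coefficient damping $k/n<1$. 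This is shorter and self-contained; the paper's route, by contrast, shows how the corollary fits into the companion-polynomial and snail framework developed earlier. One small omission: for $n=1$ the derivative $f'=a_1\neq 0$ is a nonzero constant, so $\rho(f')$ is a supremum over the empty set and the bounds hold vacuously; you should say this explicitly.
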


\begin{proof}
Let $g(x) = a_n^{-1} f(x) = x^n+a_n^{-1}a_{n-1}x^{n-1}+\dots+a_n^{-1}a_1x+a_n^{-1}a_0$. Alternativity of the Cayley--Dickson algebra $A \otimes_F F(x)$ implies that $C_g(x) = \norm(a_n^{-1} f(x)) = \norm(a_n^{-1}) \norm(f(x)) = \norm(a_n^{-1}) C_f(x)$. Hence the roots of $C_g(x)$ and $C_f(x)$ coincide and, similarly, the roots of $C_{g'}(x)$ and $C_{f'}(x)$ coincide. By \cite[Theorem~3.4]{Chapman:2020a}, the conjugacy classes of both the roots and the critical points of $f(x)$ are the same as those of the roots and the critical points of $g(x)$, so it remains to apply the previous results to $g(x)$.
\end{proof}

Note that many other classical bounds on the roots of complex polynomials can be extended to the octonionic case, see, for example,~\cite{SerofioBeitesVitoria:2021}.

\begin{exmpl}
If $A$ is at least $16$-dimensional locally-complex Cayley--Dickson algebra, then Corollary~\ref{cor:nonmonic-polynomial} does not hold for $f(x) \in A[x]$. Indeed, consider $a = e_1 + e_{10}$ and $b = e_7 + e_{12}$ from Example~\ref{exmpl:companion-fails}, and let $f(x) = ax$. Then any element from $\mathbb{R}b$ is a root of $f(x)$, so there is no sphere which contains all roots of $f(x)$.
\end{exmpl}

The Gauss--Lucas theorem for $\mathbb{C}$ states also that if $K(f)$ is not a line segment then $f'(z) = 0$ and $f(z) \neq 0$ imply that $z$ is an interior point of $K(f)$. However, as the following example shows, this is not the case for a locally-complex Cayley--Dickson algebra $A$ of dimension at least $4$.

\begin{exmpl}
	Let $E_n = \{ e_m \; | \; m = 0, \dots, 2^n - 1\}$ be the standard basis of $A$, $i = e_1$, $j = e_2$. Consider $f(x) = (x^2 - 1)(x - i)^2 + j \in A[x]$. Then $f_i(x) = (x^2 - 1)(x - i)^2$ and $f'(x) = f'_i(x)$, so $f'(i) = f_i(i) = 0$ but $f(i) \neq 0$. Clearly, $i$ belongs to the boundary of $K_{\mathbb{C}_i}(f_i)$, and thus it belongs to the boundary of $\sn(f)$. The disadvantage of this example is that for $I \perp 1, i, j$ we have $f_I(x) = (x^2 - 1)^2$, so $K_{\mathbb{C}_I}(f_I)$ is just a line segment. However, if $I \perp 1$ and $I \not\perp i,j$ (such values of $I$ are dense in the imaginary part of the unit sphere of $A$), then $f_I(x) = x^4 - \alpha x^3 - 2 x^2 + \alpha x + 1 + \beta$, where $\alpha, \beta \in \mathbb{R}I \setminus \{ 0 \}$. One can show that in this case $K_{\mathbb{C}_I}(f_I)$ has dimension $2$.
\end{exmpl}

\begin{rem}
In this context it is worth to mention another classical result, stating that a non-constant complex polynomial $f(x)$ is divisible by its derivative $f'(x)$ if and only if $f(x)=c(x-a)^n$ for some $c,a\in \mathbb{C}$ and $n\in \mathbb{N}$. This can be rephrased in the following way: every root of $f'(x)$ is a root of $f(x)$ if and only if $f(x)=c(x-a)^n$. This version does not extend to $\mathbb{H}$, and thus not to $\mathbb{O}$ either. Indeed, consider $f(x)=\frac{1}{3} x^3-\frac{i+j}{2} x^2+i j x-\frac{1}{6} j+\frac{1}{2}i$, then the only root $j$ of $f'(x)=x^2-(i+j)x+ij$ is also a root of $f(x)$, despite $f(x)$ not being of the desired form. The issue of divisibility requires further investigation.
\end{rem}

\section{Jensen's Theorem for Cayley--Dickson polynomials}\label{sec:jensen}

Jensen's theorem is a classical improvement of the bound given by the Gauss--Lucas theorem for the special case of polynomials with real coefficients. In this section we extend this theorem to locally-complex Cayley--Dickson algebras. First we recall the original theorem.

Given $z\in\mathbb{C}$, denote as usual by $\bar{z}$ its complex-conjugate. The circle with the center $\Re(z)$ and radius $|\Im(z)|$ is called a Jensen circle. In particular, its diameter is $2|\Im(z)|$ and it passes through $z$ and $\bar{z}$. Jensen's theorem states that the non-real critical points of a polynomial $f(x)$ with real coefficients lie on or within the Jensen circles defined by the complex-conjugate pairs of non-real roots of $f(x)$.

First, we generalize Jensen circles to locally-complex Cayley--Dickson algebras. Let $A$ be a locally-complex Cayley--Dickson algebra. For $\lambda\in{A}$ we define the Jensen sphere of $\lambda$ to be the ($2^n-1$)-dimensional sphere with
radius $|\Im(\lambda)|$ and center $\Re(\lambda)$. We remark that, by Proposition~\ref{prop:quadratic-equivalence-LCCDA}, all the points in the quadratic-equivalence class of $\lambda$ are on this sphere. 

\begin{prop}\label{prop:spherical-real-poly}
	Let $A$ be a locally-complex Cayley--Dickson algebra, and $f(x)\in A[x]$ be a polynomial with real coefficients, then all non-real roots of $f(x)$ are spherical. 
\end{prop}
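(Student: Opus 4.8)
The plan is to show that if $f(x)$ has real coefficients and $\lambda \in A \setminus \mathbb{R}$ is a root, then the entire quadratic-equivalence class of $\lambda$ consists of roots, which by Theorem~\ref{thm:decompose-iff-spherical} (equivalently, by the definition of spherical root) makes $\lambda$ spherical. First I would note that since $f(x) \in \mathbb{R}[x] \subseteq A[x]$ has central coefficients, the substitution behaves well: for any $r \in A$, the element $f(r)$ lies in the associative-commutative subalgebra $\mathbb{R}[r]$, and $f$ acts on $\mathbb{R}[r]$ exactly as an ordinary real polynomial. Thus $f(\lambda) = 0$ depends only on the characteristic polynomial $p_\lambda(x) = x^2 - \tr(\lambda)x + \norm(\lambda)$ of $\lambda$.

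The key step is then to perform division with remainder of $f(x)$ by $p_\lambda(x)$ in $\mathbb{R}[x]$: since $p_\lambda(x) \in \mathbb{R}[x]$ is monic of degree $2$, we may write $f(x) = q(x) p_\lambda(x) + (\alpha x + \beta)$ with $q(x) \in \mathbb{R}[x]$ and $\alpha, \beta \in \mathbb{R}$. Now I would evaluate at $\lambda$: working inside the commutative associative algebra $\mathbb{R}[\lambda]$, and using that $p_\lambda(\lambda) = 0$, we get $0 = f(\lambda) = q(\lambda) p_\lambda(\lambda) + \alpha \lambda + \beta = \alpha \lambda + \beta$. Since $\lambda \notin \mathbb{R}$, this forces $\alpha = \beta = 0$, so $f(x) = q(x) p_\lambda(x)$ with $q(x) \in \mathbb{R}[x] \subseteq A[x]$. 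By Lemma~\ref{lem:if-decompose}, $\lambda$ is then a spherical root of $f(x)$, which is exactly the claim.

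Alternatively, and perhaps more cleanly, I could invoke Proposition~\ref{prop:quadratic-equivalence-LCCDA} directly: if $r$ is quadratically-equivalent to $\lambda$, then $p_r(x) = p_\lambda(x)$, so $r$ satisfies the same degree-$2$ relation; evaluating the real polynomial $f$ at $r$ inside $\mathbb{R}[r]$ and reducing modulo $p_r(x) = p_\lambda(x)$ gives $f(r) = \alpha r + \beta$ with the same real $\alpha, \beta$ as obtained for $\lambda$, namely $\alpha = \beta = 0$; hence $f(r) = 0$. Since this holds for every $r$ quadratically-equivalent to $\lambda$, the root $\lambda$ is spherical.

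The only real subtlety — and the step I would be most careful about — is justifying that substitution of $r$ into a \emph{real} polynomial commutes with reduction modulo $p_r(x)$; this is where power-associativity of $A$ and the fact that $\mathbb{R}[r]$ is a genuine commutative associative subalgebra (both guaranteed for Cayley--Dickson algebras) are used. There is no obstruction coming from non-associativity here precisely because all the coefficients are central, so the computation never leaves $\mathbb{R}[r]$. Once that is in place the proof is a one-line division argument, so I would keep the write-up short.
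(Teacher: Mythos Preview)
Your proposal is correct and follows essentially the same approach as the paper: both perform division with remainder of $f(x)$ by the real monic polynomial $p_\lambda(x)$, observe that the remainder $\alpha x + \beta$ has real coefficients, and conclude $\alpha = \beta = 0$ from $\lambda \notin \mathbb{R}$, whence $f(x) = g(x)p_\lambda(x)$ and $\lambda$ is spherical by Lemma~\ref{lem:if-decompose} (the paper cites the equivalent Theorem~\ref{thm:decompose-iff-spherical}). Your write-up is slightly more explicit about why substitution behaves well inside $\mathbb{R}[\lambda]$, but the argument is the same.
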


\begin{proof}
	Consider the proof of Lemma~\ref{lem:if-spherical}. We performed a division with remainder of $f(x)$ by $x^2-Tx+N$, where $T=\tr(\lambda)$ and $N=\norm(\lambda)$:
	\[f(x) = g(x)(x^2-Tx+N) + ax+b.\]
	In this special case, we have $a$ and $b$ real. 
	Any root $r$ of $x^2-Tx+N$ must then satisfy $ar+b=0$. 
	We thus have two cases: if $a$ and $b$ are both $0$,  then Theorem~\ref{thm:decompose-iff-spherical} proves that the root is spherical. Otherwise, $ar+b=0$ has a unique solution which must be real.
	
\end{proof}

\begin{thm} \label{thm:jensen}
	Let $A$ be a locally-complex Cayley--Dickson algebra, and $f\in A[x]$ be a polynomial with real coefficients, then the spherical critical points of $f(x)$ lie on or within the Jensen spheres defined by the spherical roots of $f(x)$. 
\end{thm}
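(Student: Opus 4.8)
The plan is to follow the same strategy as the proof of \Tref{thm:Gauss-Lucas}: restrict everything to a single complex subfield of $A$ and apply the classical Jensen theorem there. Let $\lambda$ be a spherical critical point of $f(x)$ (if there is none, there is nothing to prove). By definition $\lambda\in A\setminus\mathbb{R}$, so $\Im(\lambda)\ne 0$ and $I:=|\Im(\lambda)|^{-1}\,\Im(\lambda)$ has trace $0$ and norm $1$, with $\lambda\in\mathbb{C}_I$. Since $f(x)$ has real coefficients, every coefficient lies in $\mathbb{R}\subseteq\mathbb{C}_I$ and is therefore fixed by $\pi_I$; hence $f_I(x)=f(x)$ read inside $\mathbb{C}_I[x]$ and $f_I^{\perp}(x)=0$. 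Consequently $f'(x)=f'_I(x)$, and, as noted in the proof of \Tref{thm:Gauss-Lucas}, this polynomial is the derivative of $f_I(x)$. Thus $\lambda$ is a non-real root of the derivative of the complex polynomial $f_I(x)\in\mathbb{C}_I[x]\cong\mathbb{C}[x]$.

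Next I would apply the classical Jensen theorem \cite[\S{7}]{Marden:1966} inside $\mathbb{C}_I$: the non-real root $\lambda$ of $(f_I)'(x)$ lies on or within the Jensen circle in $\mathbb{C}_I$ of some complex-conjugate pair $\mu,\overline{\mu}$ of non-real roots of $f_I(x)$. Such a pair exists because, were all roots of $f_I(x)$ in $\mathbb{C}_I$ real, the complex Gauss--Lucas theorem would force $(f_I)'(x)$ to have only real roots, contradicting the choice of $\lambda$. It then remains to transport this back to $A$. Since $f(x)$ has real coefficients and $\mathbb{C}_I$ is a subalgebra of $A$, evaluating $f$ at $\mu$ inside $\mathbb{C}_I$ gives the same element as evaluating it in $A$, so $f(\mu)=f_I(\mu)=0$ and likewise $f(\overline{\mu})=0$; being non-real, $\mu$ and $\overline{\mu}$ are spherical roots of $f(x)$ by \Pref{prop:spherical-real-poly}. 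Moreover $\Re(\overline{\mu})=\Re(\mu)$ and $|\Im(\overline{\mu})|=|\Im(\mu)|$, so $\mu$ and $\overline{\mu}$ define one and the same Jensen sphere $S$ of $A$, namely the sphere of radius $|\Im(\mu)|$ centered at $\Re(\mu)$.

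Finally I would reconcile the two meanings of ``on or within''. A point of $\mathbb{C}_I$ lying on or within the Jensen circle of $\mu$ is at Euclidean distance at most $|\Im(\mu)|$ from the real number $\Re(\mu)$ (the norm of $A$ restricts to that of $\mathbb{C}_I$), which is precisely the condition for lying on or within the Jensen sphere $S$ inside $A$. Since $\lambda$ satisfies this, $\lambda$ lies on or within the Jensen sphere defined by the spherical root $\mu$ of $f(x)$, which is what we wanted. I do not expect a genuine obstacle in this argument: the only points needing care are the bookkeeping that $f_I=f$ and $(f')_I=(f_I)'$ when the coefficients are real — already handled in the proof of \Tref{thm:Gauss-Lucas} — and the elementary remark that a complex-conjugate pair of roots shares one Jensen sphere; the substance is the classical Jensen theorem applied inside $\mathbb{C}_I$ together with \Pref{prop:spherical-real-poly}.
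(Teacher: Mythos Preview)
Your proof is correct and follows essentially the same route as the paper: restrict to the complex subfield $\mathbb{C}_I$ containing the spherical critical point, apply the classical Jensen theorem there, invoke \Pref{prop:spherical-real-poly} to see that the resulting non-real root of $f$ is spherical, and observe that the Jensen circle in $\mathbb{C}_I$ sits inside the corresponding Jensen sphere in $A$. The only cosmetic difference is that you phrase the restriction via the projection $\pi_I$ and the decomposition $f=f_I+f_I^{\perp}$, whereas the paper simply works in the quadratic extension generated by $\lambda$.
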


\begin{proof}
Let $\lambda$ be a spherical critical point of $f(x)$. This point generates a quadratic extension $C$ of $\mathbb{R}$, isomorphic to $\mathbb{C}$. By Jensen's theorem, applied to $C$, the critical point $\lambda$ is on or contained within a Jensen circle defined by a pair of complex-conjugate roots $z,\bar{z}\in{C}$ of $f(x)$. The root $z$ is spherical by Proposition~\ref{prop:spherical-real-poly}, and so the Jensen circle of $z$ is contained in the Jensen sphere at $z$, since they are both centered at $\Re(z)$ and have the same radius (by Proposition~\ref{prop:quadratic-equivalence-LCCDA}). Thus $\lambda$ must be on or within the Jensen sphere. 
\end{proof}

\begin{rem}
We finish by noticing that Theorem~\ref{thm:jensen} provides an improvement to the bounds given Theorem~\ref{thm:gauss--lucas}, as the companion polynomial has real coefficients, and so the spherical critical points of $f(x)$ are inscribed by the (finitely many) Jensen spheres of the spherical roots of $C_f(x)$.
\end{rem}

\section*{Acknowledgements}

The first two authors thank the organizers of the CIMPA 2020 school on Non-associative Algebras and their Applications (taking place in August 2021), which triggered this collaboration. The work of the second and the fourth authors was partially financially supported by the grant RSF 21-11-00283.

\bibliographystyle{abbrv}
\bibliography{bibfile}

\end{document}